\newtheorem{theoremalpha}{Theorem}
\newtheorem{theorem}{Theorem}[section]
\newtheorem*{theorem*}{Theorem}
\newtheorem{proposition}[theorem]{Proposition}
\newtheorem{lemma}[theorem]{Lemma}
\theoremstyle{definition}
\newtheorem{definition}[theorem]{Definition}
\DeclareMathOperator{\reals}{\mathbb{R}}
\DeclareMathOperator{\naturals}{\mathbb{N}}
\DeclareMathOperator{\supp}{supp}
\DeclareMathOperator{\inter}{int}
\newcommand{\dd}{\mathrm{d}}
\newcommand{\fu}{{H}^u}
\newcommand{\fs}{{H}^s}
\newcommand{\fcs}{{W}^{ws}}
\newcommand{\fuc}{\tilde{{H}}^u}
\newcommand{\fsc}{\tilde{{H}}^s}
\newcommand{\fcuc}{\tilde{W}^{wu}}
\newcommand{\fcsc}{\tilde{W}^{ws}}
\newcommand{\cali}{\tilde {I}}
\newcommand{\vc}{\tilde{V}^u}
\newcommand{\tm}{T^1M}
\newcommand{\tuc}{T^1 \tilde  M}
\newcommand{\bound}{\partial \tilde M}
\newcommand{\doublebound}{\partial^2 \tilde M}
\title{Unique ergodicity of the horocycle flow of a higher genus compact surface with no conjugate points and continuous Green bundles}
\author{Sergi Burniol Clotet\\
LPSM, Sorbonne Université, 4 Place Jussieu, 75005 Paris, France\\
(email: sergi.burniol\_clotet@upmc.fr)}
\date{November 10, 2022}
\begin{document}

%space between paragraphs
\setlength{\parskip}{0.5ex plus 0.5ex minus 0.2ex}
\maketitle

\begin{abstract}
    We show that the horocyclic flow of an orientable compact higher genus surface without conjugate points and with continuous Green bundles is uniquely ergodic. The result applies to nonflat nonpositively curved surfaces and generalizes a classical result of Furstenberg and Marcus in negative curvature. The proof relies on the definition of a uniformly expanding parametrization on the quotient by the strips of the surface.
\end{abstract}

Key words: horocyclic flow, no conjugate points, uniquely ergodic

2020 Mathematics Subject Classification: 37D40, 37D25

\section{Introduction}
It is well known that the horocyclic flow of a compact hyperbolic surface is uniquely ergodic, first established by H. Furstenberg \cite{Furstenberg73}. B. Marcus generalized the result for compact surfaces with variable negative curvature \cite{Marcus75}. The proof relies on the uniform hyperbolicity of the associated geodesic flows. The main result of this article is a generalization for a large class of surfaces with non uniformly hyperbolic geodesic flow.

\begin{theoremalpha}
	Let $M$ be an orientable compact $C^\infty$ surface without conjugate points, with genus higher than one and continuous Green bundles. Let $h_s$ be a horocyclic flow on the unit tangent bundle $T^1 M$ of $M$. Then there is a unique Borel probability measure on $T^1 M$ invariant by the flow $h_s$.
\end{theoremalpha}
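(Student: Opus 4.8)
The plan is to follow the Furstenberg--Marcus philosophy --- unique ergodicity of a horocyclic flow comes from mixing of the underlying geodesic flow --- with the continuity of the Green bundles playing the structural role that uniform hyperbolicity plays in the classical proofs. By Oxtoby's criterion it suffices to exhibit one $h_s$-invariant Borel probability measure $m$ on $T^1M$ such that, for every $f\in C(T^1M)$, the averages $A_Tf(v)=\frac1T\int_0^T f(h_sv)\,\dd s$ converge to $\int f\,\dd m$ uniformly in $v$ as $T\to+\infty$. I would take $m$ to be the Bowen--Margulis measure $m_{\mathrm{BM}}$ of the geodesic flow $g_t$, whose construction, local product structure, and mixing in this setting I treat as available inputs (presumably set up earlier via a Patterson--Sullivan / Margulis-measure construction). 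Two structural facts about the continuous-Green-bundle setting are used throughout: the (un)stable horocyclic foliations of $T^1M$ are continuous with $C^1$ leaves, and the conditional measures of $m_{\mathrm{BM}}$ along unstable horocycles are proportional, with a positive continuous density, to the arclength measure --- this last point being special to surfaces and to the continuity hypothesis.

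The core is an equidistribution statement obtained by exploiting that $g_t$ normalises $h_s$: one has $g_t\circ h_s=h_{b(t,\cdot,s)}\circ g_t$ for a reparametrisation $b$ with $b(t,x,\cdot)$ an increasing homeomorphism of $[0,\infty)$ fixing $0$, whose derivative $\partial_s b$ is the horocyclic Jacobian of $g_t$ and is governed by the Riccati equation of the unstable Green bundle, hence positive. Changing variables in $A_Tf(v)$, one expresses it, for a suitable $t=t(T,v)\to+\infty$, as a (weighted) average of $f\circ g_t$ over an unstable horocyclic arc of bounded length based at $g_{-t}v$. It then suffices to prove: for every $f\in C(T^1M)$ and every bounded-length unstable horocyclic arc $\gamma$, the normalised pushforward $(g_t)_*(\mathrm{arclength}|_\gamma)$ converges weakly to $m_{\mathrm{BM}}$, uniformly over $\gamma$. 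To get this I would thicken $\gamma$ into a flow box $N$ fibered by nearby unstable arcs over a transversal in the stable and flow directions; since the $m_{\mathrm{BM}}$-conditionals on these fibres are arclength up to continuous density, mixing of $g_t$ gives $\frac{1}{m_{\mathrm{BM}}(N)}\int_N f(g_tx)\,\dd m_{\mathrm{BM}}(x)\to\int f\,\dd m_{\mathrm{BM}}$; and this box average differs from the arclength average over $\gamma$ itself by an error one controls because forward $g_t$ contracts the stable direction and merely translates the flow direction (the residual flow translation being absorbed by also averaging $f$ over a short geodesic-time interval, which changes none of the limits). Covering $T^1M$ by finitely many such boxes and invoking uniform continuity of $f$ promotes this to the required uniformity in $\gamma$, hence in $v$.

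The main obstacle is precisely everything that uniform hyperbolicity supplies for free in Marcus's argument and that here must be extracted from the continuity of the Green bundles on a compact surface of genus $>1$ without conjugate points: (i) that $g_t$ genuinely --- if non-uniformly --- expands unstable horocyclic arclength, \emph{uniformly} over $T^1M$, so that the reduction above produces arcs of bounded length and the choice $t(T,v)\to+\infty$ is uniform in $v$; (ii) controlled distortion of the horocyclic Jacobian $\partial_s b$, without which the ``weighted average'' cannot be compared to the box average; and (iii) the identification of the $m_{\mathrm{BM}}$-conditionals on unstable horocycles with arclength, together with mixing of $g_t$. I expect (iii) and mixing to rest on prior work, while (i)--(ii) are the genuinely new analytic input, to be proved by combining compactness of $M$ with the continuity of the Green bundles so as to rule out the degeneracies --- flat strips, loss of transitivity, unbounded distortion --- that could otherwise occur; when the geodesic flow is not transitive the whole discussion takes place on the support of $m_{\mathrm{BM}}$.
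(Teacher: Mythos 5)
There is a genuine gap, and it is exactly the obstruction the hypotheses of this paper do \emph{not} remove: flat strips. Your plan hinges on (i) uniform expansion of unstable horocyclic arclength under $g_t$ and (ii) bounded distortion of the horocyclic Jacobian, and you propose to prove these by using compactness plus continuity of the Green bundles ``so as to rule out the degeneracies --- flat strips, \dots''. But flat strips are fully compatible with the hypotheses: the model example satisfying the theorem (a flat cylinder with two negatively curved ends) has open sets of vectors whose stable and unstable horocycles intersect in nontrivial intervals, and along these strips $g_t$ does not expand horocyclic arclength at all, for all time. The unstable Riccati solution vanishes there, so the reparametrisation cocycle $\partial_s b$ is not bounded away from $1$ (let alone uniformly expanding), $t(T,v)$ cannot be chosen uniformly in $v$, and the Marcus-type change of variables does not produce arcs of bounded length. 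Your structural input (iii) also fails: the conditionals of the Bowen--Margulis measure on unstable horocycles are not arclength up to continuous density --- they are Patterson--Sullivan-type measures that give \emph{zero} mass to the interiors of strips (and even in strictly negative variable curvature they are the Margulis conditionals, not arclength). Relatedly, taking $m=m_{\mathrm{BM}}$ as the candidate limit of the arclength-parametrised Birkhoff averages is already wrong in variable negative curvature: the unique invariant measure of the arclength horocycle flow is in general not the measure of maximal entropy.

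The paper's proof is built precisely to get around this. Instead of seeking a uniformly expanding parametrisation on $T^1M$ (which, as the introduction notes, need not exist), it defines measures $\mu_{H^u(v)}$ on horocycles from the Patterson--Sullivan density, which satisfy $\mu_{g_tH}=e^{\delta t}g_{t*}\mu_H$ but are supported off the strip interiors; it then passes to the Gelfert--Ruggiero quotient $X$ of $T^1M$ obtained by collapsing each strip to a point, where these measures have full support, no atoms, and infinite half-horocycle mass, hence define a continuous, uniformly expanding horocyclic flow. Coud\`ene's criterion (using the product structure of $\mu_{BM}$ and the commutation relation $\phi_t\circ h_s=h_{se^{\delta t}}\circ\phi_t$) gives unique ergodicity on $X$. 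The remaining --- and genuinely nontrivial --- step, which your proposal has no counterpart for, is lifting unique ergodicity from $X$ back to $T^1M$: this is done not through any semiconjugacy of parametrised flows but through a correspondence between invariant measures and holonomy-invariant families of transversal measures, applied to a transversal inside the rank-one set on which the quotient map is injective. If you want to salvage a Marcus/Coud\`ene-style argument, it has to run on the quotient, and you then still need an argument of this transversal--holonomy type to return to $T^1M$.
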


Nonflat compact surfaces with nonpositive curvature or without focal points satisfy the hypothesis of the theorem and the conclusion is new for these settings. In particular, it is valid for the famous example of a flat cylinder with two negatively curved compact ends. The article also generalizes some partial results obtained previously by the author in nonpositive curvature \cite{Burniol21,Burniol21b}. 

Stable and unstable horocycles are well defined for surfaces without conjugate points and form continuous minimal foliations by curves of the unit tangent bundle of the surface \cite{Eberlein77}. A continuous parametrization of these curves is called a horocyclic flow. Unique ergodicity is a property which does not depend on the parametrization, hence it is enough to work with a fixed one, for instance, by the arc-length of horocycles. When the curvature of $M$ is strictly negative, the horocycles coincide with the stable and unstable manifolds of the geodesic flow.

The key idea in the proof of Marcus in negative curvature is the definition of a uniformly expanding parametrization of the horocycles, which gives an explicit description of Birkhoff averages on horocycles when they are pushed by the geodesic flow. Then it is enough to justify that horocycles are equidistributed under the action of the geodesic flow, which implies the pointwise convergence of Birkhoff averages towards a constant, and thus unique ergodicity (see \cite{Coudene1} for example).
Beyond the strictly negative curvature case, Marcus's method cannot be applied directly to the horocycle flow on the unit tangent bundle of a surface without conjugate points, because the uniformly expanding parametrization does not necessarily exist. We will need to find a way to get around this problem.

One of the main difficulties when dealing with weakly hyperbolic geodesic flows is the presence of strips, that is, families of geodesics that stay at bounded distances in the past and in the future when viewed in the universal cover of the manifold. They are obstructions to expansiveness of the geodesic flow, and they usually imply a lack of contraction or expansion in the horospheres with vectors tangent to them. On a surface without conjugate points and genus higher than one, since it is a visibility manifold, a strip appears exactly when there is a nontrivial intersection of a stable horocycle with an unstable one \cite{RiffordRuggiero21}.
K. Gelfert and R. Ruggiero dealt with these strips by identifying them, thus obtaining a quotient space with a continuous flow semiconjugated to the geodesic flow. They show that this quotient, when the Green bundles are continuous, is a topological 3-manifold, and that the quotient flow is expansive, topologically mixing and has local product structure \cite{GelfertRuggiero2, GelfertRuggiero}. As an application, they deduce the uniqueness of the measure of maximal entropy of the geodesic flow.

It is on this quotient that a uniformly expanding horocyclic flow will be defined. Coudene's theorem will guarantee that this flow is uniquely ergodic. Although this horocyclic flow cannot be lifted to the unit tangent bundle of the surface, we succeed in proving that any horocyclic flow above needs to be uniquely ergodic also.

The paper is organized as follows: in Section \ref{geometry} we introduce horocycles and the main tools to study them, including the quotient space defined by Gelfert and Ruggiero. Then, in Section \ref{measures}, we resume the study of the Patterson-Sullivan measure started in \cite{ClimenhagaKnieperWar21a} and define the family of expanding measures on the unstable horocycle.
The next step, Section \ref{quotient}, will be to define the parametrization of the horocyclic flow on the quotient and prove the unique ergodicity. Finally, in Section \ref{lift}, we will need to
prove the technical result that enables to lift unique ergodicity.\\

%EU flag and the following sentence need to be displayed
\noindent
\begin{minipage}[c]{0.08 \linewidth}
	\includegraphics{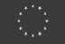}	
\end{minipage}
\begin{minipage}[c]{0.9\linewidth}
	This project has received funding from the European Union’s Horizon 2020 research and innovation program under the Marie Skłodowska-Curie grant agreement No. 754362.
\end{minipage}

\section{Geometry of surfaces without conjugate points}\label{geometry}

Let $M$ be a connected orientable compact $C^\infty$ Riemannian surface of genus strictly higher than one. We denote by $T^1 M$ its unit tangent bundle and by $\pi: T^1 M\to M$ the bundle map. We assume that $M$ has no conjugate points, which means that, at every point $x\in M$, the exponential map $\exp_x: T_pM \to M$ is non-singular. This in turn implies that $\exp_x$ is a universal covering map. We take one universal cover $p: \tilde M \to M$ of $M$ and we equip $\tilde M$ with the pullback metric. Then $\tilde M$ has no conjugate points again and, moreover, its geodesics are globally minimizing.

A unit tangent vector $v\in T^1M$ generates a unit speed geodesic $c_v:\reals \to \tilde M$. The geodesic flow $g_t: T^1M\to T^1M , \,t\in \reals$, is the translation of vectors along geodesics, i.e. for $v\in T^1M$, $g_t(v)=\dot{c}_v(t)$. The same notations will be used for the universal cover $\tilde M$. 

The classical construction of the \textit{boundary} of $\tilde{M}$ allows us to understand better the structure of $\tilde{M}$. Two geodesic rays $\sigma_1,\sigma_2: [0,+\infty)\to \tilde M$ are \textit{asymptotic} if there exists $C>0$ such that for all $t\ge0$, we have $d(\sigma_1(t),\sigma_2(t))\le C$. Since being asymptotic is an equivalence relation, we can consider the set $\partial \tilde M$ of equivalence classes of geodesic rays on $\tilde M$.
The universal cover $\tilde M$ satisfies the following property, called the uniform visibility axiom: for every $\varepsilon>0$, there exists $R>0$ such that for every geodesic segment $\sigma:[a,b]\to \tilde M$ and every $p\in \tilde M$, 
$$d(\sigma([a,b]),p)\ge R \implies \angle_p(\sigma(a),\sigma(b))\le \varepsilon.
$$
This is true because $M$, which is by hypothesis an orientable compact surfaces of genus higher than one, admits a hyperbolic metric \cite[Theorem 5.1]{Eberlein72a}. As a consequence, the boundary of $M$ has a relatively nice structure. For $v\in T^1 \tilde M $, let $v_+$ and $v_-$ denote, respectively, the classes in $\partial \tilde M$ of the positive and the negative rays generated by $v$. Also, for $x,y\in \tilde M,x\not = y$, we denote by $V(x,y)$ the unique vector in $T_x^1 \tilde M $ tangent to the geodesic joining $x$ to $y$.

\begin{theorem}\label{boundary_properties} \cite{Eberlein72a}
Let $M$ be a compact connected surface without conjugate points of genus greater than one.
    \begin{enumerate}
        \item For every $x\in \tilde M$ and $\xi\in \partial \tilde M$, there exists a unique vector $v\in T^1_x \tilde M$ such that $v_+=\xi$. We denote this vector by $V(x,\xi)$. 
        \item There is a topology on $\bar M= \tilde M \cup \partial \tilde M$ which extends that of $\tilde M$, with a basis formed by open sets of $\tilde M$ together with sets of the form 
        $$
        T_{v,\varepsilon, R}:=\{q\in \bar M\,|\, \angle(V(\pi(v),q),v)<\varepsilon \text{ and } d(\pi(v),q)>R \text{ if }q\in\tilde M\}.
        $$
        with $v\in T^1\tilde M$, $\varepsilon,R>0$.
        \item The map $\{(x,y)\in \tilde M \times \bar M \,|\, x\not = y \} \to T^1\tilde M$, $(x,y)\mapsto V(x,y)$ is continuous. Its restriction to $\tilde M \times \partial \tilde  M \to T^1\tilde M$ is a homeomorphism. Moreover, $\bar M$ is topologically a closed disk, $\partial \tilde M$ corresponds to the boundary of the disk and $\tilde M$ to the interior.
        
        \item For every two distinct points $\xi,\eta $ at the boundary $\partial \tilde M $, there exists at least one vector $v\in T^1\tilde M$ such that $v_+=\xi$ and $v_-=\eta$.
    \end{enumerate}
\end{theorem}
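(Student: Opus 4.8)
The statement is classical, due to Eberlein, and my plan is to base all four parts on two properties of the universal cover $\tilde M$: (i) since $\tilde M$ is complete, simply connected and free of conjugate points, every $\exp_x : T_x\tilde M\to\tilde M$ is a diffeomorphism and all geodesics are globally minimizing; and (ii) the uniform visibility axiom. From these I would first extract two working lemmas. The first is the \emph{divergence of rays}: for each $x\in\tilde M$ and $\varepsilon>0$ there is $R>0$ with $d(c_v(t),c_w(s))\ge R$ whenever $v,w\in T^1_x\tilde M$ satisfy $\angle_x(v,w)\ge\varepsilon$ and $t,s\ge R$; one argues by contradiction, applying the visibility estimate to the segment joining $c_v(t)$ to $c_w(s)$, which would subtend an angle $<\varepsilon$ at $x$ while joining two rays leaving $x$ at angle $\ge\varepsilon$. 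The second is a \emph{uniform thinness} estimate: for every $D>0$ there is $C>0$ such that any minimizing segment $\tau$ whose initial endpoint lies within distance $D$ of the initial endpoint of a minimizing segment $\sigma$ and whose final endpoint equals that of $\sigma$ stays in the $C$-neighbourhood of $\sigma$; again this follows from visibility by contradiction, and, together with local compactness of $T^1\tilde M$ and uniqueness of geodesics, it allows one to extract from a sequence of segments a limiting ray asymptotic to a prescribed one.

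For Part 1, uniqueness is immediate: if $v,w\in T^1_x\tilde M$ have $v_+=w_+$, the rays $c_v,c_w$ are asymptotic, so the divergence lemma forces $\angle_x(v,w)=0$, i.e.\ $v=w$. For existence I would represent $\xi$ by a ray $\sigma$ starting at a point $y$, set $v_n=V(x,\sigma(n))$, and pass to a subsequential limit $v$ in the compact sphere $T^1_x\tilde M$; the thinness lemma applied to the segments $[x,\sigma(n)]$ with $D=d(x,y)$ then shows $c_v$ is asymptotic to $\sigma$, so $v=V(x,\xi)$.

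For Parts 2 and 3 I would first check, by a routine computation, that the sets $T_{v,\varepsilon,R}$ together with the open subsets of $\tilde M$ form a basis: given a point $q$ in an intersection of two such sets, one produces inside it a smaller set of the same kind (or an ordinary ball, if $q\in\tilde M$), using the triangle inequalities for $d$ and for angles and the visibility axiom to transfer control of $\angle_{\pi(v)}$ into control of $\angle_{\pi(w)}$ at far-away points; the same estimates give that the topology is Hausdorff and restricts to the original one on $\tilde M$. Continuity of $(x,y)\mapsto V(x,y)$ off the diagonal is the smoothness of $\exp^{-1}$ in the interior and, at a pair $(x,\xi)$ with $\xi\in\partial\tilde M$, follows from the thinness lemma applied to any $y_n\to\xi$. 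By Part 1, $(x,\xi)\mapsto V(x,\xi)$ is a bijection $\tilde M\times\partial\tilde M\to T^1\tilde M$ with inverse $v\mapsto(\pi(v),v_+)$; since $\pi$ is continuous and $v\mapsto v_+$ is continuous into $\partial\tilde M$ (again by visibility, comparing the cones based at nearby footpoints), this bijection is a homeomorphism. Finally, fixing $o\in\tilde M$, I would consider the ``geodesic polar'' map $\Phi$ from the closed disk $\bigl(T^1_o\tilde M\times[0,\infty]\bigr)\big/\bigl(T^1_o\tilde M\times\{0\}\bigr)$ to $\bar M$ defined by $\Phi(v,t)=c_v(t)$ for $t<\infty$ and $\Phi(v,\infty)=v_+$; it is a continuous bijection (continuity at $t=\infty$ is the definition of the cone basis, bijectivity uses that $\exp_o$ is a diffeomorphism and Part 1) from a compact space onto the Hausdorff space $\bar M$, hence a homeomorphism, carrying the boundary circle onto $\partial\tilde M$ and the interior onto $\tilde M$; in particular $\bar M$ is compact.

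For Part 4, let $\xi\ne\eta$ in $\partial\tilde M$ and pick $x_n,y_n\in\tilde M$ with $x_n\to\xi$ and $y_n\to\eta$; let $\tau_n=[x_n,y_n]$ and let $z_n\in\tau_n$ be a point nearest to a fixed $o\in\tilde M$. If $d(o,z_n)\to\infty$ along a subsequence, then $d(o,\tau_n)\to\infty$, so by visibility $\angle_o(x_n,y_n)\to0$; but $x_n\to\xi$ and $y_n\to\eta$ give $V(o,x_n)\to V(o,\xi)$ and $V(o,y_n)\to V(o,\eta)$, whence $V(o,\xi)=V(o,\eta)$ and $\xi=\eta$ by Part 1 --- a contradiction. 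So $(z_n)$ remains in a compact set; for $n$ large $z_n$ lies in the interior of $\tau_n$, and after passing to a subsequence the unit vector $w_n\in T^1_{z_n}\tilde M$ tangent to $\tau_n$ towards $x_n$ converges to some $w$. Since $w_n=V(z_n,x_n)$ and $-w_n=V(z_n,y_n)$, the continuity of $V$ from Part 3 yields $w=V(z,\xi)$ and $-w=V(z,\eta)$, that is $w_+=\xi$ and $w_-=\eta$, so $v=w$ works. The technical heart of all of this is concentrated in the consequences of the uniform visibility axiom used above --- the divergence of rays, the uniform thinness of geodesic segments, and the non-escape of connecting segments between distinct ideal points, all proved by contradiction from the axiom --- and I expect establishing these to be the only genuine obstacle; they are also the one place where the genus hypothesis enters, via the existence of a hyperbolic metric on $M$ and hence the uniform visibility of $\tilde M$.
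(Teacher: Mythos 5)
The paper itself offers no proof of this theorem: it is quoted directly from Eberlein \cite{Eberlein72a}, so there is no internal argument to compare yours against. Your reconstruction follows the classical visibility-manifold route of Eberlein: two preliminary consequences of uniform visibility (divergence of rays issuing from a common point at a definite angle, and a thin-triangle estimate for minimizing segments with nearby initial endpoints and a common terminal endpoint), then the cone sets $T_{v,\varepsilon,R}$ as a basis, a polar-coordinate homeomorphism from a cone over $T^1_o\tilde M$ onto $\bar M$ for the closed-disk statement, and the limiting-connecting-segment argument for part 4. The key steps are sound: both lemmas do follow from uniform visibility by the contradiction arguments you indicate (for the thinness lemma, a point of $\tau$ far from $\sigma$ sees $\sigma$ under a small angle while seeing its own endpoints under angle $\pi$, forcing it close to the short side $[p,q]$), uniqueness and existence in part 1 are handled correctly, and in part 4 the non-escape of the segments $[x_n,y_n]$ and the identification $w_+=\xi$, $w_-=\eta$ via continuity of $V$ are exactly Eberlein's argument. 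The one caveat is that what you defer as a ``routine computation'' — that the cones form a basis whose germ at a boundary point is independent of the base vector, that $\bar M$ is Hausdorff, and that $v\mapsto v_+$ is continuous — is where the bulk of the technical work in \cite{Eberlein72a} actually sits; your thinness lemma is the right tool for those change-of-basepoint estimates, but as written they remain asserted rather than proved.
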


Another geometric tool that is defined for surfaces without conjugate points are Busemann functions. For $v\in T^1 \tilde M$ and $t>0$, consider the function $b_{v,t}:\tilde M \to \reals $ defined by $b_{v,t}(x)=d(c_v(t),x)-t$. It is easy to show using the triangular inequality that these functions converge pointwise when $t$ goes to infinity. The Busemann function $b_v:\tilde M \to \reals $ is defined by
$$
b_v(x)=\lim_{t\to +\infty}b_{v,t}(x).
$$
\begin{proposition}\label{propertiesBusemann} \cite{Eschenburg77,KnieperThesis}
Let $M$ be a compact connected surface without conjugate points of genus greater than one.
    \begin{enumerate}
        \item For every $v\in T^1 \tilde M$, $b_v$ is $C^1$ with $L$-Lipschitz derivative, where $L$ only depends on the lower curvature bound of $M$.
        \item The functions $b_v$ depend continuously on $v\in T^1 \tilde M$ in the $C^1$-topology.%divergence of geodesic rays
        \item For all $v\in T^1 \tilde M$ and $x\in \tilde M$, we have $\nabla b_v(x) = V(x,v_+)$.%one implication need dgr the other quasi convexity
    \end{enumerate}
\end{proposition}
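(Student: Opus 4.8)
The plan is to study the smooth approximations $b_{v,t}(x)=d(c_v(t),x)-t$ and pass to the limit $t\to\infty$. The easy facts first: each $b_{v,t}$ is $1$-Lipschitz by the triangle inequality, and since $\tilde M$ has no conjugate points and is simply connected the map $\exp_{c_v(t)}$ is a diffeomorphism, so $x\mapsto d(c_v(t),x)$, hence $b_{v,t}$, is smooth away from $c_v(t)$, with gradient $\nabla b_{v,t}(x)$ the unit vector at $x$ along the geodesic through $x$ and $c_v(t)$, i.e. $\nabla b_{v,t}(x)=V(x,c_v(t))$ in the notation of Theorem~\ref{boundary_properties}. Using the triangle inequality again, $t\mapsto b_{v,t}(x)$ is nonincreasing, and $b_{v,t}(x)\ge -d(c_v(0),x)$, so the pointwise limit $b_v$ exists and is $1$-Lipschitz; by Dini's theorem (applicable because the limit $b_v$ is continuous) the convergence $b_{v,t}\to b_v$ is uniform on compact sets.

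The core of the argument, and the step I expect to be the main obstacle, is a uniform $C^{1,1}$ estimate on the functions $b_{v,t}$. The Hessian $\nabla^2 b_{v,t}(x)$ is the shape operator of the distance sphere $S\bigl(c_v(t),d(c_v(t),x)\bigr)$ at $x$, which satisfies a Riccati equation along the radial geodesic. The lower curvature bound $K\ge -a^2$ yields, by Riccati (Rauch) comparison, an upper bound $\le a\coth(ar)$, hence $\le a\coth(a)$ once $r=d(c_v(t),x)\ge 1$; the absence of conjugate points is what prevents the shape operator from escaping to $-\infty$ and supplies a lower bound, and the refinement of Eschenburg~\cite{Eschenburg77} (see also \cite{KnieperThesis}) is precisely that this two-sided bound $L$ can be chosen to depend only on $a$. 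The upshot: for each $x_0\in\tilde M$ the gradients $\nabla b_{v,t}$ are $L$-Lipschitz on the ball $B(x_0,1)$, uniformly in all $v\in T^1\tilde M$ and all $t$ with $d(c_v(t),x_0)\ge 2$.

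Now fix $x_0$ and $v$. As $t\to\infty$ one has $c_v(t)\to v_+$ in $\bar M$, so the continuity of $(x,\xi)\mapsto V(x,\xi)$ on $\tilde M\times\bar M$ from Theorem~\ref{boundary_properties}(3) — this is where the visibility property of a genus $>1$ surface enters — gives $\nabla b_{v,t}(x)=V(x,c_v(t))\to V(x,v_+)$ pointwise on $B(x_0,1)$; by the uniform Lipschitz bound this convergence is in fact uniform on $B(x_0,1)$ and the limit $V(\cdot,v_+)$ is $L$-Lipschitz. Combined with $b_{v,t}\to b_v$ uniformly, elementary calculus gives that $b_v$ is $C^1$ on $B(x_0,1)$ with $\nabla b_v=V(\cdot,v_+)$; as $x_0$ is arbitrary this establishes (1) and (3) with $L=L(a)$.

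For (2), take $v_n\to v$ in $T^1\tilde M$; then $\pi(v_n)\to\pi(v)$ and $(v_n)_+\to v_+$ in $\bar M$, so $\nabla b_{v_n}=V(\cdot,(v_n)_+)\to V(\cdot,v_+)=\nabla b_v$ pointwise and, since all the $\nabla b_{v_n}$ are $L$-Lipschitz with the same $L$, uniformly on compact sets. Because $b_{v_n}(\pi(v_n))=0=b_v(\pi(v))$ for every $n$ (as $d(c_w(t),c_w(0))=t$), integrating the gradients along short paths issuing from $\pi(v_n)$, respectively $\pi(v)$, upgrades this to $b_{v_n}\to b_v$ uniformly on compact sets, i.e. $b_{v_n}\to b_v$ in the $C^1$ topology. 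Everything beyond the Riccati/shape-operator estimate of the second paragraph is routine; extracting a constant depending only on the lower curvature bound out of the no-conjugate-points hypothesis is the delicate part.
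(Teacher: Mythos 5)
The paper does not actually prove Proposition \ref{propertiesBusemann}: it is quoted from \cite{Eschenburg77,KnieperThesis}, with only the remark that item 1 needs just no conjugate points and a lower curvature bound while items 2 and 3 use the visibility structure. Your reconstruction is essentially the classical argument of those references and its structure is sound: monotonicity of $t\mapsto b_{v,t}$ plus Dini for locally uniform convergence; a two-sided Riccati/comparison bound on the Hessians of $b_{v,t}$ (shape operators of large distance spheres), uniform in $v$ and $t$, which is exactly where no conjugate points and the curvature bound enter and yields the uniform constant $L$; and identification of the limit gradient through the continuity of $(x,\xi)\mapsto V(x,\xi)$ on $\tilde M\times\bar M$ from Theorem \ref{boundary_properties}, which is indeed where the genus/visibility hypothesis is used, in agreement with the paper's remark, and which also gives item 2 once you normalize by $b_{v}(\pi(v))=0$ and integrate the gradients. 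Two caveats. First, the only delicate step, the lower Riccati bound uniform in $v,t$, is asserted by citation rather than proved; since that is the very citation the paper leans on this is acceptable, but a self-contained version needs the short ODE comparison: if $u$ solves $u'=-u^{2}-K\le a^{2}-u^{2}$ and $u(s_{0})<-a$, it blows down to $-\infty$ in finite time, contradicting that the sphere's Riccati solution exists for all parameter values along a complete geodesic without conjugate points; this gives $-a\le u(s)\le a\coth(as)$. Second, an orientation point: with the paper's conventions ($b_{v,t}(x)=d(c_v(t),x)-t$, and $V(x,y)$ pointing from $x$ toward $y$), the gradient of $x\mapsto d(c_v(t),x)$ is $-V(x,c_v(t))$, so the limit is $\nabla b_v=-V(\cdot,v_+)$; your formula $\nabla b_{v,t}=V(\cdot,c_v(t))$ reproduces the sign slip already present in statement (3) of the proposition (which is itself inconsistent with the paper's definition of $\tilde H^{s}(v)$ as $\{-\nabla b_v\}$ with positive endpoint $v_+$), so the analysis is unaffected but the sign should be fixed once a convention is chosen. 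A last cosmetic point: in the uniform Lipschitz step require $d(c_v(t),x_0)$ to exceed, say, $4$ rather than $2$, so that geodesics joining points of $B(x_0,1)$ remain in the region where the Hessian bound applies.
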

The first property is true for any manifold without conjugate points and curvature bounded below. For the other two we need some additional hypothesis, such as the axiom of visibility, which holds in our situation. Property 3 states that the integral curves of $\nabla b_v$ are exactly geodesics asymptotic to $c_v$.
    
Horocycles in $\tilde M$ are defined as level sets of Busemann functions, and they are $C^{1,L}$ submanifolds. We consider their lifts to the unit tangent bundle by normal vectors. More precisely,
$$
\tilde H^s(v) := \{ \,-\nabla b_v (x)\, | \, x\in b_v^{-1}(0)\},
$$
$$
\tilde H^u(v) := \{ \,\nabla b_{-v} (x)\, | \, x\in b_{-v}^{-1}(0)\}
$$  %picture of horocycles!!!!!!!!
are called respectively the stable and the unstable horocycle of $v\in T^1\tilde M$. We emphasize that all the vectors in $\tilde H^s(v) $ (resp. $\tilde H^u(v)$) have the same positive (resp. negative) endpoint $v_+$ (resp. $v_-$), which is called the center of the horocycle. Notice that by Proposition \ref{propertiesBusemann}, stable and unstable horocyles form two $g_t$-invariant continuous foliations of $T^1\tilde M$ with Lipschitz leaves. Since Busemann functions are invariant by isometries, so are horospheres, and they pass to the quotient $T^1 M$.

We introduce the notation
$$
\beta_\xi(x,y):=b_{V(y,\xi)}(x),
$$
that we will use later. This quantity is equal to the distance between the horocycles centered at $\xi$ passing through $x$ and $y$. It is called a Busemann cocycle and it depends continuously on $(\xi,x,y)\in \partial\tilde M\times \tilde M\times \tilde M$. Finally, we define the sets
$$ \fcsc(v):=\cup_{t\in \reals} g_t \fsc(v),\, \fcuc(v):=\cup_{t\in \reals} g_t \fuc(v),
$$
which will be called the weak stable and unstable leaves.

Since isometries of $\tilde M$ preserve horocycles, they descend to the quotient $T^1M$. When $M$ is compact, horocyclic foliations of $T^1M$ are already known to be minimal in our context.

\begin{theorem}\cite[Theorem 4.5]{Eberlein77}
    Let $M$ be a compact connected surface without conjugate points of genus greater than one. Then each horocycle $H^s(v)$ or $H^u(v)$ is dense in $T^1M$.
\end{theorem}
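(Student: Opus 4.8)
The plan is to reduce the statement to a minimality property of the deck group acting on the universal cover, and then to establish that property from the visibility structure. First, since the flip $v\mapsto -v$ is a homeomorphism of $\tm$ carrying stable horocycles to unstable ones, it suffices to prove that every stable horocycle $\fs(v)$ is dense. Let $\Gamma$ be the group of deck transformations of $p\colon\uc\to M$, acting by isometries on $\uc$ and on $\tuc$, with $\tm=\Gamma\backslash\tuc$. As the induced map $\tuc\to\tm$ is a covering (in particular open and surjective), one checks that $\fs(v)$ is dense in $\tm$ if and only if $\bigcup_{\gamma\in\Gamma}\gamma\,\fsc(v)$ is dense in $\tuc$. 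It is convenient to record that $\fsc(v)=\{w\in\tuc : \pi w\in b_v^{-1}(0),\ w_+=v_+\}=\{V(y,v_+) : y\in b_v^{-1}(0)\}$; that $w\mapsto w_-$ maps $\fsc(v)$ onto $\bound\setminus\{v_+\}$ (by Theorem~\ref{boundary_properties}(4) there is a geodesic from any $\eta\ne v_+$ to $v_+$, and being asymptotic to $v_+$ it meets the horosphere $b_v^{-1}(0)$, since $b_v$ is strictly monotone along it); and that the concentric family $\{g_t\fsc(v)\}_{t\in\reals}$ fills up $\fcsc(v)=\{w\in\tuc : w_+=v_+\}$.

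The argument rests on two standard consequences of the compactness of $M$ and the uniform visibility axiom: (a) $\Gamma$ acts minimally on $\bound$ (standard for a torsion-free cocompact group of isometries of a visibility manifold: the limit set is all of $\bound$ and every point is a conical limit point, whence minimality via north--south dynamics of axial elements); and (b) $\Gamma$ satisfies the duality condition: for every $w\in\tuc$ there are $\gamma_n\in\Gamma$ with $\gamma_n(\pi w)\to w_+$ and $\gamma_n^{-1}(\pi w)\to w_-$ in $\bar M$. Combining (a) with the continuity of $(x,\xi)\mapsto V(x,\xi)$ (Theorem~\ref{boundary_properties}(3)) immediately yields that the $\Gamma$-orbit of the central stable leaf is dense: for $w_0\in\tuc$, pick $\gamma_n$ with $\gamma_n^{-1}(v_+)\to(w_0)_+$, so that $V(\pi w_0,\gamma_n^{-1}v_+)\in\gamma_n^{-1}\fcsc(v)$ converges to $V(\pi w_0,(w_0)_+)=w_0$. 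Hence $\fcsc(v)$, equivalently the central stable leaf $W^{cs}(v)$ in $\tm$, is dense.

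The crux is to upgrade this to density of the single horosphere $\fsc(v)$, i.e.\ to show that the geodesic flow inside $\fcsc(v)=\bigcup_t g_t\fsc(v)$ is not really needed. Given a target $w_0$ with $\xi=(w_0)_+$, $\eta=(w_0)_-$, $x_0=\pi(w_0)$ and an $\varepsilon>0$, the plan is to construct a single $\gamma\in\Gamma$ such that $\gamma^{-1}$ carries $\xi$ to within $\varepsilon$ of $v_+$ \emph{and} carries $x_0$ to within $\varepsilon$ of the horosphere $b_v^{-1}(0)$. Granting this, compare $\gamma^{-1}w_0$ with the vector $u\in\fsc(v)$ whose negative endpoint is $\gamma^{-1}\eta$ (legitimate once $\gamma^{-1}\eta\ne v_+$, which can be arranged): these two vectors share the negative endpoint $\gamma^{-1}\eta$, have $\varepsilon$-close positive endpoints $v_+$ and $\gamma^{-1}\xi$, and have base points $O(\varepsilon)$ apart ($\gamma^{-1}x_0$ lies $\varepsilon$-close to $b_v^{-1}(0)\ni\pi u$, and both sit on the two nearly equal geodesics emanating from $\gamma^{-1}\eta$); invoking the continuity of $V(\cdot,\cdot)$, the continuity of Busemann functions (Proposition~\ref{propertiesBusemann}(2)), and the uniform visibility axiom on the relevant region, one gets $d(u,\gamma^{-1}w_0)=O(\varepsilon)$, hence $d(\gamma u,w_0)=O(\varepsilon)$ with $\gamma u\in\bigcup_\gamma\gamma\,\fsc(v)$. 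As $w_0$ and $\varepsilon$ are arbitrary, density follows.

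The only delicate point, and the one I expect to be the main obstacle, is exactly the construction of a $\gamma$ meeting \emph{both} demands at once. They pull against each other: forcing $\gamma^{-1}\xi$ near $v_+$ by iterating an axial element of $\Gamma$ with attracting fixed point near $\xi$ drives $\gamma^{-1}x_0$ off toward the \emph{repelling} fixed point and hence to $b_v=+\infty$, far from $b_v^{-1}(0)$ --- which is precisely why input (a) alone yields only the weaker, central-leaf statement. Overcoming this requires the duality condition (b) in a sharpened form that records not merely the limiting direction of the orbit but also its displacement relative to a fixed horosphere; this is where the compactness of $\tm$ (Poincar\'{e} recurrence of the geodesic flow) and the visibility axiom do the real work, and carrying it through carefully is the substance of Eberlein's argument.
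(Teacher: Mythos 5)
The first half of your argument --- the reduction to $\tuc$ and the deduction, from minimality of the $\Gamma$-action on $\bound$ together with continuity of $(x,\xi)\mapsto V(x,\xi)$, that the $\Gamma$-saturation of the \emph{central} stable leaf is dense --- is correct, but it proves only density of the weak (central) stable leaves in $T^1M$, which is strictly weaker than the theorem. The entire content of the statement is the upgrade from the flow-saturated leaf to a single horocycle, and that is exactly the step you do not carry out: producing, for a given target $w_0$ and $\varepsilon>0$, a deck transformation $\gamma$ with $\gamma^{-1}(w_0)_+$ within $\varepsilon$ of $v_+$ \emph{and} $|b_v(\gamma^{-1}\pi(w_0))|$ small. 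Your inputs (a) and (b) do not yield this: (a) controls only directions at infinity, and (b) as stated records only limiting endpoints of an orbit, not its position relative to a fixed horosphere; as you observe yourself, the obvious choices (powers of an axial element with attracting point near $(w_0)_+$) drive $b_v(\gamma^{-1}\pi(w_0))$ to $+\infty$. Saying that a ``sharpened form'' of duality plus recurrence ``does the real work'' and that carrying it out ``is the substance of Eberlein's argument'' is an accurate diagnosis, but it means your proposal is a reduction of the theorem to its hardest step, not a proof. (Note that the paper does not reprove this statement either; it quotes it from Eberlein, where precisely this horospherical control of orbit points --- in effect that boundary points are horospherical limit points, combined with recurrence --- is the heart of the proof of Theorem 4.5.)

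Two smaller points in your closing comparison would also need repair even if the required $\gamma$ were available. In the presence of strips the map $w\mapsto w_-$ need not be injective on $\tilde H^s(v)$, so ``the vector $u\in\tilde H^s(v)$ whose negative endpoint is $\gamma^{-1}\eta$'' is not well defined; moreover two geodesics sharing both endpoints at infinity are in general only within the uniform constant $Q$ of each other, so sharing $\gamma^{-1}\eta$ and having $\varepsilon$-close positive endpoints does not force the basepoints to be $O(\varepsilon)$ apart. The cleaner route is to drop $\eta$ entirely: since $|b_v(\gamma^{-1}\pi(w_0))|$ is the distance from $\gamma^{-1}\pi(w_0)$ to the horosphere $b_v^{-1}(0)$, pick $y\in b_v^{-1}(0)$ at that small distance and set $u=V(y,v_+)\in\tilde H^s(v)$; then continuity of $V$ applied to $u$ and $\gamma^{-1}w_0=V(\gamma^{-1}\pi(w_0),\gamma^{-1}(w_0)_+)$ gives the desired closeness. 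This is a fixable presentation issue; the genuine gap is the one described above.
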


As stated in Proposition \ref{boundary_properties}, two distinct points of the boundary are always joined by a geodesic. Contrary to strictly negative curvature, there may be more than one geodesic joining the same points at infinity. Two such geodesics are called biasymptotic, because they stay at bounded distances for all time. As usual we prefer to work with vectors, so for $v\in T^1 \tilde M$ we consider the set $S(v)$ of vectors $w$ whose geodesic $c_w$ is biasymptotic to $c_v$,
$$
S(v):=\{ w\in T^1\tilde M \, | \, \sup_{t\in \reals} d(c_v(t),c_w(t))<+\infty \}.
$$
The following lemma tells us that these biasymptotic geodesics appear in strips.

\begin{lemma}\cite[Lemma 3.1]{RiffordRuggiero21}
    For every $v\in T^1\tilde M$,
    $$
    S(v)= \bigcup_{t\in \reals} g_t \tilde I(v), \text{ where } \tilde I(v)=\tilde H^s(v)\cap \tilde H^u(v).
    $$
    Moreover, $\tilde I(v)$ is the arc of a (possibly trivial) continuous simple curve $c: [a, b]\to T^1\tilde M$. If $v\in T^1 \tilde M$ is the lift of a periodic vector $\bar v$, then $S(v)$ is foliated by lifts of periodic geodesics, which all are in the same homotopy class of $c_{\bar v}$ and which all have the same period.
    
Finally, there exists $Q = Q(M) > 0$ such that the Hausdorff distance between any two biasymptotic geodesics in $T^1\tilde M$ is bounded from above by $Q$.
\end{lemma}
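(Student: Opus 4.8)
My plan is to establish the four assertions of the lemma in turn; the crucial input throughout is the uniform visibility axiom. \textbf{Step 1: the constant $Q$.} I would fix the visibility constant $R=R(\varepsilon)$ for $\varepsilon=\pi/2$ and prove the stronger fact that any two geodesics $c_1,c_2$ of $\tilde M$ with the same pair of endpoints $\xi,\eta\in\partial\tilde M$ satisfy $d(p,c_2)\le R$ for every $p$ on $c_1$. Indeed, as $T\to+\infty$ the vectors $V(p,c_2(T))$ and $V(p,c_2(-T))$ converge, by continuity of $V$ up to the boundary (Theorem \ref{boundary_properties}), to $V(p,\xi)$ and $V(p,\eta)$, which are opposite because $p$ lies on the geodesic from $\eta$ to $\xi$; hence $\angle_p(c_2(T),c_2(-T))\to\pi$, so this angle eventually exceeds $\varepsilon$ and visibility gives $d(p,c_2([-T,T]))<R$. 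As $R$ depends only on $M$, the Hausdorff distance between two biasymptotic geodesics is at most $Q:=R$; and a short reparametrisation estimate, using that $b_v$ is $1$-Lipschitz and changes at unit rate along $c_v$ and along any geodesic asymptotic to it, upgrades this to a uniformly bounded synchronised distance, so that $S(v)=\{w\in T^1\tilde M:\ w_+=v_+\ \text{and}\ w_-=v_-\}$.

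\textbf{Step 2: the description of $S(v)$.} By definition $\tilde W^{cs}(v)=\bigcup_t g_t\tilde H^s(v)$, and since each orbit with $w_+=v_+$ meets $\tilde H^s(v)$ exactly once (where $b_v\circ\pi$ vanishes, $b_v\circ\pi$ being strictly monotone along a geodesic asymptotic to $c_v$), one has $\tilde W^{cs}(v)=\{w:w_+=v_+\}$, and symmetrically $\tilde W^{cu}(v)=\{w:w_-=v_-\}$; with Step 1 this gives $S(v)=\tilde W^{cs}(v)\cap\tilde W^{cu}(v)$. The heart of the matter is the identity $b_v(\pi w)+b_{-v}(\pi w)=0$ for all $w\in S(v)$. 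On one hand $b_v(x)+b_{-v}(x)=\lim_t\bigl(d(c_v(t),x)+d(x,c_v(-t))-2t\bigr)\ge0$ by minimality of $c_v$, and the same with $w$ in place of $v$. On the other hand, since $c_v$ and $c_w$ are asymptotic their Busemann functions have the same gradient, so $b_v-b_w$ and $b_{-v}-b_{-w}$ are the constants $b_v(\pi w)$ and $b_{-v}(\pi w)$ respectively; adding these and evaluating at $\pi v$ yields $b_v(\pi w)+b_{-v}(\pi w)=-\bigl(b_w(\pi v)+b_{-w}(\pi v)\bigr)\le0$, forcing equality. Granting this, for $w\in S(v)$ the unique time $t_0$ with $g_{t_0}w\in\tilde H^s(v)$ also has $b_{-v}=0$ at the base point, hence $g_{t_0}w\in\tilde H^u(v)$ and $w\in g_{-t_0}\tilde I(v)$; the inclusion $\bigcup_t g_t\tilde I(v)\subseteq S(v)$ is immediate.

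\textbf{Step 3: $\tilde I(v)$ is an arc.} The identity above shows that, for $w\in\tilde H^s(v)$, one has $w\in\tilde I(v)$ if and only if $w_-=v_-$; so $\tilde I(v)$ is a fibre of the continuous map $w\mapsto w_-$ from the simple curve $\tilde H^s(v)$ into $\partial\tilde M\setminus\{v_+\}$ (no geodesic has two equal endpoints, by minimality), and it suffices to prove this map weakly monotone. The geodesics meeting the horocycle $b_v^{-1}(0)$, all asymptotic to $v_+$, are pairwise disjoint: two of them meeting at a point would both contain the unique ray from that point to $v_+$ (Theorem \ref{boundary_properties}) and hence coincide, as geodesics of $\tilde M$ are globally minimising. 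Thus each such geodesic separates the disk $\bar M$, the horocycle crosses it exactly once and transversally, and the remaining geodesics lie wholly on one side; following this along the horocycle gives the monotonicity. A fibre of a weakly monotone map is an interval, so $\tilde I(v)$ is an arc, possibly the single point $\{v\}$.

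\textbf{Step 4: the periodic case.} Let $\gamma$ be the deck transformation with $\gamma c_v(t)=c_v(t+T)$ where $T=\ell(c_{\bar v})$; then $\gamma_* v=g_T v$ and $\gamma$ fixes $v_\pm$. By equivariance of horocycles $\gamma_*\tilde H^s(v)=\tilde H^s(g_T v)=g_T\tilde H^s(v)$, and likewise for $\tilde H^u$, so $\gamma_*\tilde I(v)=g_T\tilde I(v)$. Using the flow to identify $S(v)$ with $\tilde I(v)\times\RR$, the isometry $\gamma$ acts as $(u,t)\mapsto(\phi(u),t+T)$ with $\phi:=g_{-T}\circ\gamma_*$ a homeomorphism of the arc $\tilde I(v)$ onto itself; since $M$ is orientable $\phi$ preserves the orientation of the arc and therefore fixes its endpoints. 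If $\phi=\mathrm{id}$, then $\gamma c_u(t)=c_u(t+T)$ for every $u\in\tilde I(v)$, that is, each $c_u$ is the lift of a closed geodesic of period $T$ in the free homotopy class of $\gamma$; and since every $w\in S(v)$ lies on such a $c_u$, the assertion follows. \textbf{The main obstacle is proving $\phi=\mathrm{id}$}: a point $u$ with $\phi(u)\neq u$ would produce a geodesic of $M$, contained in the compact invariant set $\pi(S(v))/\langle\gamma\rangle$, that is asymptotic in forward and backward time to two distinct closed geodesics of that strip, a configuration that must be ruled out — this is where one uses that the geodesic flow neither expands nor contracts the central arcs $\tilde I(g_t v)$ (a consequence of the boundedness, equivalently continuity, of the Green bundles), equivalently that biasymptotic closed geodesics on a surface without conjugate points have the same period. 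All the other steps are direct consequences of the visibility axiom, the regularity of Busemann functions, and the global minimality of geodesics in $\tilde M$.
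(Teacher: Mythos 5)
The paper does not prove this statement at all --- it is imported verbatim from Rifford--Ruggiero \cite[Lemma 3.1]{RiffordRuggiero21} --- so there is no internal proof to compare against; I can only assess your argument on its own terms. Your Steps 1--3 are essentially sound and follow the standard route: the uniform visibility axiom with $\varepsilon=\pi/2$ does give a constant $Q=Q(M)$ bounding the distance from any point of one biasymptotic geodesic to the other (though note the lemma asserts the bound for geodesics \emph{in} $T^1\tilde M$, and you only obtain it downstairs in $\tilde M$; this is the version actually used later in Lemma \ref{lemma_bound_l}, but strictly speaking it is a weaker statement). The identity $b_v(\pi w)+b_{-v}(\pi w)=0$ for $w\in S(v)$, obtained by combining $b_v+b_{-v}\ge 0$ (global minimality of $c_v$) with $b_v(\pi w)=-b_w(\pi v)$, $b_{-v}(\pi w)=-b_{-w}(\pi v)$ (equal gradients via Proposition \ref{propertiesBusemann}(3)), is correct and cleanly yields $S(v)=\bigcup_t g_t\tilde I(v)$; and the monotonicity of $w\mapsto w_-$ along $\tilde H^s(v)$ (asymptotic geodesics are disjoint, each separates the disk $\bar M$) is the right mechanism for the interval structure, modulo routine points you gloss over (connectedness/properness of the horocycle as a simple curve, and closedness plus boundedness of the fibre, the latter following from $Q$).

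The genuine gap is Step 4, and you flag it yourself. Reducing the periodic assertion to ``$\phi=g_{-T}\circ\gamma_*=\mathrm{id}$ on $\tilde I(v)$'' is fine, and orientability does force $\phi$ to fix the endpoints of the arc; but showing that $\phi$ has no non-fixed interior points --- equivalently, that an interior geodesic of the strip cannot spiral onto the boundary (or other closed) geodesics --- is precisely the content of the periodic case, and you do not prove it. Worse, the tool you invoke is unavailable: continuity (or ``boundedness'') of the Green bundles is \emph{not} a hypothesis of this lemma, which in \cite{RiffordRuggiero21} (and in the paper's Section \ref{geometry}) is stated for an arbitrary compact higher-genus surface without conjugate points; the Green-bundle assumption only enters later, for the Gelfert--Ruggiero quotient. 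Moreover your fallback phrasing (``biasymptotic closed geodesics have the same period'') does not address the actual difficulty, which is that a $\phi$-unfixed $u$ gives a geodesic that is not closed at all, asymptotic in $M$ to two closed geodesics of the strip; ruling out this Reeb-like configuration requires a real argument specific to surfaces without conjugate points, not a restatement. As it stands, the second paragraph of the lemma (foliation of $S(v)$ by lifts of periodic geodesics of the same period and homotopy class) is unproven in your proposal.
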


The set $\tilde I(v),\, v\in T^1 \tilde M$, will be called the interval of $v$. It is said to be trivial if it is equal to $\{v\}$. In the quotient, we define for $v\in T^1M$ 
$$
I(v)=dp( \tilde I(\tilde v)) \text{ where } \tilde v \text{ is any lift of } v \text{ to }T^1 \tilde M.
$$

The existence of strips is the most obvious difference between the structure of strict negatively curved surfaces and the structure of the surface $M$ with the present conditions. A way to simplify the dynamics of the geodesic flow on the strips is to identify them into single orbits as follows. 

\begin{definition}
	Let $\sim$ be the equivalence relation on $T^1 M$ defined by
	$$
	v\sim w \iff w\in I(v).
	$$
	The quotient of $T^1M$ by this equivalence relation is denoted by $X=T^1M/\sim$, and quotient map by $\chi: T^1M \to X$. We also define a flow $\phi_t: X\to X$ by putting for $\theta \in X,\, t\in \reals$,
	$$
	\phi_t (\theta):=\chi (g_t(v))\text{, where }v \text{ is any vector in the class }\theta.
	$$
\end{definition}

Observe that the flow $\phi_t$ is well-defined, because $g_t$ preserves the intervals $I(v)$, and continuous. By definition, $\chi $ is a semi-conjugation between $g_t$ and $\phi_t$. Similarly, we define the quotient $\tilde X$ of $T^1 \tilde M$ by the intervals $\tilde I(v)$, and the quotient map $\tilde \chi: T^1\tilde M\to \tilde X$. all these objects are related as expressed in the following diagram.

\[
\begin{tikzcd}
	T^1\tilde M \arrow[d, "d\pi"] \arrow[r, "\tilde \chi"] & \tilde X  \arrow[d] \\
	T^1M  \arrow[r,"\chi"] & X
\end{tikzcd}
\]

Gelfert and Ruggiero obtained strong results about the structure of the quotient space and the dynamics of the quotient flow under a regularity assumption on Green subbundles. These subbundles are the graphs of the stable and the unstable solutions of the Ricatti matrix equation introduced by L. W. Green in \cite{Green58} for manifolds without conjugate points with curvature bounded below. Later, P. Eberlein used these bundles to characterize Anosov geodesic flows \cite{Eberlein73b}. 

\begin{theorem}
    Let $M$ be an $n$-manifold without conjugate points and curvature bounded below. There exist two rank $n-1$ subbundles $G^s,\, G^u$  of $TT^1M$ normal to the direction of the geodesic flow and invariant by its differential.
\end{theorem}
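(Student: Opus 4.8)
The plan is to prove this by the classical Jacobi/Riccati analysis along geodesics (Green's method). First I would fix $v\in T^1M$ and, along the geodesic $c_v\colon\reals\to M$, choose a parallel orthonormal frame of the normal bundle $\dot c_v(t)^\perp$, so that a Jacobi field $J$ perpendicular to $\dot c_v$ becomes a curve $t\mapsto J(t)\in\reals^{n-1}$ solving $J''+R(t)J=0$ with $R(t)$ the symmetric curvature operator. Under the horizontal--vertical identification, the subspace $\mathcal N(v)\subset T_v(T^1M)$ orthogonal to the geodesic vector field corresponds to pairs $(J(0),J'(0))$, and $dg_t$ acts by $(J(0),J'(0))\mapsto(J(t),J'(t))$; so a $dg_t$-invariant rank $n-1$ subbundle of $\mathcal N$ is the same thing as a $g_t$-equivariant family of $(n-1)$-dimensional spaces of perpendicular Jacobi fields, and when such a space is the graph of a symmetric operator $U$ one has $J'=UJ$ and $U'+U^2+R=0$.

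Next I would construct the two operators. For $r>0$ let $A_r$ be the matrix solution of $A''+RA=0$ with $A_r(0)=\mathrm{Id}$, $A_r(r)=0$; the absence of conjugate points makes $A_r(t)$ invertible on $[0,r)$, and a Wronskian computation shows $U^+_r(t):=A_r'(t)A_r(t)^{-1}$ is a symmetric solution of the Riccati equation there. Using the index form and the index lemma (which need only the no-conjugate-points hypothesis), I would show that $r\mapsto U^+_r(0)$ is monotone nondecreasing in the Loewner order, that the analogous solutions $U^-_s$ vanishing at time $-s$ give a nonincreasing family $s\mapsto U^-_s(0)$, and that $U^+_r(0)\le U^-_s(0)$ for all $r,s>0$ (apply the index form to the perpendicular field glued from a past and a future Jacobi piece at $t=0$). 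Fixing one parameter then gives two-sided bounds on both families; the lower curvature bound $K\ge -a^2$ enters here, through Rauch/Riccati comparison with the constant-curvature model, to make these bounds explicit and independent of $v$, which guarantees the monotone limits are finite operators. I would then define $U^s(v):=\lim_{r\to+\infty}U^+_r(0)$ and $U^u(v):=\lim_{s\to+\infty}U^-_s(0)$, so $U^s(v)\le U^u(v)$.

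Finally I would set $G^s(v):=\{(w,U^s(v)w):w\perp v\}$ and $G^u(v):=\{(w,U^u(v)w):w\perp v\}$ inside $\mathcal N(v)$, each an $(n-1)$-dimensional subspace normal to the geodesic direction. Invariance $dg_t(G^s(v))=G^s(g_tv)$ follows because translating the data of $A_r$ along $c_v$ from time $0$ to time $t$ only changes the terminal time from $r$ to $r-t$, so in the limit $r\to+\infty$ the transported graph is the defining graph at $g_tv$; and symmetrically for $G^u$. Since $v\mapsto U^+_r(0)$ is continuous for each fixed $r$ (continuous dependence of solutions of ODEs on their data, with the frame chosen continuously on a neighbourhood), $U^s=\sup_r U^+_r(0)$ is lower semicontinuous and $U^u=\inf_s U^-_s(0)$ upper semicontinuous, so $G^s,G^u$ are at least Borel subbundles (and bounded when $M$ is compact).

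I expect the main obstacle to be the bound step: squeezing out of ``curvature bounded below'' alone --- with no control on the sign or upper size of the curvature --- a uniform a priori estimate on the Riccati solutions $U^\pm_r$. This is where genuine comparison geometry is needed (Rauch's theorem, or a direct differential-inequality argument for the symmetric operator $U^\pm_r$), and it is the only place the curvature hypothesis is used; the monotonicity and crossing inequalities rely solely on the positivity of the index form, i.e.\ on the absence of conjugate points.
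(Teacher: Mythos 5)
Your proposal is correct and is exactly the classical Green--Eberlein construction: boundary-value Jacobi tensors $A_r$, symmetric Riccati solutions $U^\pm_r$, monotonicity and the crossing inequality via the index form, and the limits $U^{s},U^{u}$ whose graphs give the invariant rank $n-1$ bundles, with the curvature lower bound providing the uniform Riccati estimates. The paper does not prove this statement itself but cites Green \cite{Green58} and Eberlein \cite{Eberlein73b}, whose argument is precisely the one you reproduce, so your proof takes essentially the same approach as the source the paper relies on.
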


These subbundles are just measurable in general, an example of a compact surface without conjugate points but with discontinuous Green subbundles was built by Ballmann, Brin and Burns \cite{BallmannBrinBurns87a}. However, there are large classes of manifolds with continuous Green subbundles. In increasing order, nonpositive curvature, no focal points, no conjugate points and bounded asymptote all have continuous Green subbundles. In this article, we assume that they are continuous. We define the generalized rank $1$ set as
$$
R_1:=\{ v\in T^1 M \, |\, G^u(v)\not = G^s(v)\},
$$
and the expansive set as
$$
\mathcal{E}:=\{ v\in T^1 M \, |\, I(v) =\{v\}\}.
$$

\begin{theorem} \cite{GelfertRuggiero2}
Let $M$ be a compact connected surface without conjugate points of genus greater than one and with continuous
stable and unstable Green bundles.
    \begin{enumerate}
        \item The families $H^s$ and $H^u$ are continuous foliations of $T^1M $ by $C^1$ curves which are
tangent to the stable and the unstable Green bundles, respectively.
    \item The rank $1$ set $R_1$ is $g_t$-invariant, open, and dense in $T^1M$, and it is contained in the expansive set $\mathcal{E}$.
    \item The quotient space $X$ is a compact topological 3-manifold and the quotient flow $\Psi$ is expansive, topologically mixing, and has a local product structure.

    \end{enumerate}

\end{theorem}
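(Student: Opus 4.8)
The statement is quoted from Gelfert and Ruggiero \cite{GelfertRuggiero2}, so I only describe the strategy I would follow. For part 1 the plan is to identify the tangent line of a horocycle with a Green line by means of the second derivative of the Busemann function. Fix $v\in T^1\tilde M$. By Proposition~\ref{propertiesBusemann} the function $b_v$ is $C^1$ and the integral curves of $\nabla b_v$ are the geodesics asymptotic to $c_v$; differentiating the orthogonal Jacobi fields of this family one checks that the restriction of the Hessian of $b_v$ to the horosphere $b_v^{-1}(0)$ is well defined and solves the stable Riccati equation along $c_v$, so by definition its graph is $G^s(v)$. Hence the horosphere is an integral hypersurface of $G^s$, and after lifting by normal vectors $\tilde H^s(v)$ is tangent to $G^s(v)$; symmetrically $\tilde H^u(v)$ is tangent to $G^u(v)$. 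On a surface $G^s$ is a line field, automatically integrable, and writing a horocycle as the integral curve of a continuous unit vector field spanning $G^s$ shows at once that the leaves are $C^1$ and that the foliation depends continuously on the vector, using the continuity of the Green bundles.

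For part 2, the set $R_1=\{G^u\neq G^s\}$ is $g_t$-invariant because the Green bundles are $dg_t$-invariant, and it is open because it is the complement of the closed set on which two continuous line fields agree. The inclusion $R_1\subset\mathcal E$ follows from part 1: if $G^u(v)\neq G^s(v)$ then $\tilde H^s(\tilde v)$ and $\tilde H^u(\tilde v)$ are transverse at $\tilde v$, whereas along a nontrivial interval $\tilde I(\tilde v)=\tilde H^s(\tilde v)\cap \tilde H^u(\tilde v)$ (an arc of a simple curve by the strip lemma) the two horocycles coincide, which would force $G^s=G^u$ there; transversality therefore forces $\tilde I(\tilde v)=\{\tilde v\}$. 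The delicate point is density: if $R_1$ were not dense there would be a nonempty open set on which $G^s\equiv G^u$, hence (by part 1) an open set of vectors lying in nontrivial strips; propagating this with the minimality of the horocyclic foliations and the $g_t$-invariance, one derives a contradiction, since such an abundance of totally geodesic flat strips is incompatible with the Gauss--Bonnet constraint for a surface of genus $>1$ (equivalently, one invokes the existence of an expansive closed geodesic in this class of surfaces and spreads it by minimality).

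For part 3, compactness of $X$ is immediate from that of $T^1M$ and continuity of $\chi$. To show $X$ is a topological $3$-manifold I would produce charts in two regimes. Over a point of $R_1$, which is open, dense and invariant, nearby intervals are trivial, so $\chi$ is locally injective and $X$ is locally homeomorphic to $T^1M$. Over a vector $v$ with nontrivial interval, I would use a local parametrization of $T^1\tilde M$ near $\tilde v$ by a negative endpoint, a positive endpoint and a Busemann parameter; the relation $\sim$ collapses precisely the arc of biasymptotic geodesics, a single interval by the strip lemma, and one checks that the induced decomposition into arcs and points is shrinkable, so the quotient of this $3$-cell is again a $3$-cell and $X$ stays a manifold near $\chi(v)$. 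Expansiveness of the quotient flow $\phi_t$ is essentially built in: if two points of $X$ remain within a small constant for all $t\in\reals$, then by the visibility axiom their lifts can be chosen to stay at bounded distance for all time, hence generate biasymptotic geodesics, hence lie in the same strip, so the points agree up to a time shift. Topological mixing of $\phi_t$ follows from its topological transitivity (a consequence of the minimality of the horocyclic foliations) together with the expansiveness and the local product structure. The local product structure itself comes from part 1 and the chart description: once the strip is collapsed, the stable horocyclic foliation, the unstable horocyclic foliation and the flow direction become pairwise transverse and span, giving the required local coordinates.

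The two points I expect to be genuinely hard are the density of $R_1$ and the manifold structure near the non-expansive set. For the first, continuity of the Green bundles is not enough; the genus $>1$ and no-conjugate-points hypotheses must be used globally to rule out an open set of strips. For the second, one must exclude strips whose collapse would pinch a transverse disk and must control how the collapsed intervals vary, so as to verify shrinkability of the decomposition — and it is here that the continuity of the Green bundles is used in an essential way.
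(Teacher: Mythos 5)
This statement is not proved in the paper at all: it is imported verbatim from Gelfert--Ruggiero \cite{GelfertRuggiero2}, so there is no internal argument to compare yours against; I can only assess your sketch on its own terms, and as a reconstruction of the cited result it has two genuine gaps.

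First, in part 1 the crucial identification is asserted circularly. You say the Hessian of $b_v$ restricted to the horosphere ``solves the stable Riccati equation, so by definition its graph is $G^s(v)$''. The Riccati equation along $c_v$ has many globally relevant solutions; the stable Green solution is the specific one obtained as the limit of the solutions associated to Jacobi fields vanishing at time $r$ as $r\to+\infty$, while the horospherical one comes from the limit of the distance spheres. That these two limits coincide, and that the horocycles are genuinely $C^1$ and tangent to $G^s$, is exactly the nontrivial content of part 1 and is precisely where the continuity of the Green bundles enters (it fails in general: Ballmann--Brin--Burns give surfaces without conjugate points where the Green bundles are not even continuous). Your sketch uses the conclusion as a definition. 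Second, the density of $R_1$ is argued with a reversed implication: a nontrivial strip forces $G^s=G^u$ along it (via part 1), but $G^s\equiv G^u$ on an open set does \emph{not} yield an open set of vectors lying in nontrivial strips, so the contradiction you aim for (``abundance of flat strips vs.\ Gauss--Bonnet'') does not get off the ground, and nonemptiness of $R_1$ is never addressed. A workable route, consistent with the quoted Proposition \ref{expansive_horocycle}, is: $R_1$ is open and $g_t$-invariant; granted $R_1\neq\emptyset$ (this is where genus $>1$ must really be used), Poincar\'e recurrence for a fully supported invariant measure gives a recurrent $v\in R_1$, then $H^s(v)\subset R_1$, and Eberlein's minimality of the horocycle foliation makes $H^s(v)$ dense, hence $R_1$ dense. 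Your parenthetical ``spread by minimality'' points in this direction but is not the argument you actually wrote. The remaining items (openness, invariance, $R_1\subset\mathcal{E}$ via transversality of the two $C^1$ horocycles against a nondegenerate arc in $\tilde H^s\cap\tilde H^u$, compactness of $X$, expansiveness from the bounded-width strip lemma, and the chart/collapsing discussion for the $3$-manifold structure and local product structure) are plausible in outline, though the manifold-structure and mixing steps remain at the level of a program rather than a proof.
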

The existence of expansive stable and unstable leaves is one of the key ideas that makes the previous theorem work. We state below the result for later use.

\begin{proposition} \cite[Proposition 3.6]{GelfertRuggiero2} \label{expansive_horocycle}Let $M$ be a compact connected surface without conjugate points of genus greater than one.
    Assume that $v\in R_1$ is forward $g_t$-recurrent, that is, there exists a sequence $t_n\to +\infty $ such that $g_{t_n}(v)\to v$. Then, for every $w\in H^s(v)$, we have 
$$
d(g_{t_n}(v),g_{t_n}(w))\xrightarrow[n\to+\infty]{}0 
$$
Moreover, if the Green bundles are continuous, we have
$$
H^s(v)\subset R_1 \subset \mathcal{E}.
$$
The analogous statement holds for $H^u$ as $t\to -\infty$.
\end{proposition}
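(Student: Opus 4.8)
The plan is to work in the universal cover $\uc$ and to combine the recurrence of $v$ with the inclusion $R_1\subseteq\mathcal{E}$ supplied by the theorem of Gelfert and Ruggiero. Fix a lift $\tilde v\in\tuc$ of $v$; since $dp(\fsc(\tilde v))=\fs(v)\ni w$, there is a lift $\tilde w\in\fsc(\tilde v)$ of $w$, and it has $\tilde w_+=\tilde v_+=:\xi$. By definition of $\xi$ the forward rays of $c_{\tilde v}$ and $c_{\tilde w}$ are asymptotic, so $\rho(t):=d(c_{\tilde v}(t),c_{\tilde w}(t))$ satisfies $C:=\sup_{t\ge0}\rho(t)<+\infty$. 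I would then pick deck transformations $\gamma_n$ with $\gamma_n g_{t_n}(\tilde v)\to\tilde v$. As the $\gamma_n$ are isometries, $d(c_{\gamma_n\tilde v}(t),c_{\gamma_n\tilde w}(t))=\rho(t)$ for every $t$; in particular the footpoint of $\gamma_n g_{t_n}(\tilde w)$ lies at distance $\rho(t_n)\le C$ from that of $\gamma_n g_{t_n}(\tilde v)$, which converges, so along a subsequence $\gamma_n g_{t_n}(\tilde w)\to\tilde w_\infty$ for some $\tilde w_\infty\in\tuc$. Everything reduces to showing $\tilde w_\infty=\tilde v$: as the sequence stays in a fixed compact set, this forces $\gamma_n g_{t_n}(\tilde w)\to\tilde v\leftarrow\gamma_n g_{t_n}(\tilde v)$, whence (projecting by $dp$) $d(g_{t_n}v,g_{t_n}w)\to0$ and likewise $g_{t_n}w\to v$ in $\tm$.

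To identify $\tilde w_\infty$ I would establish two facts. First, $g_{t_n}\tilde w\in\fsc(g_{t_n}\tilde v)$ by $g_t$-invariance of the stable horocyclic foliation, hence $\gamma_n g_{t_n}\tilde w\in\fsc(\gamma_n g_{t_n}\tilde v)$, and by continuity of this foliation $\tilde w_\infty\in\fsc(\tilde v)$. Second, writing $\gamma_n g_{t_n}\tilde w=g_{t_n}(\gamma_n\tilde w)$ and $\gamma_n g_{t_n}\tilde v=g_{t_n}(\gamma_n\tilde v)$, one has for every fixed $s\in\reals$
$$
d(c_{\tilde w_\infty}(s),c_{\tilde v}(s))=\lim_n d\bigl(c_{\gamma_n\tilde w}(t_n+s),c_{\gamma_n\tilde v}(t_n+s)\bigr)=\lim_n\rho(t_n+s)\le C,
$$
the inequality holding because $t_n+s\ge0$ for $n$ large; thus $\tilde w_\infty\in S(\tilde v)$. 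By the Rifford--Ruggiero lemma $S(\tilde v)=\bigcup_\tau g_\tau\cali(\tilde v)$, and since $v\in R_1\subseteq\mathcal{E}$ we have $\cali(\tilde v)=\{\tilde v\}$, so $S(\tilde v)$ is the orbit of $\tilde v$; hence $\tilde w_\infty=g_{\tau_0}\tilde v$ for some $\tau_0$. But $\tilde w_\infty\in\fsc(\tilde v)$ puts its footpoint $c_{\tilde v}(\tau_0)$ on $b_{\tilde v}^{-1}(0)$, while $b_{\tilde v}(c_{\tilde v}(\tau_0))=-\tau_0$; so $\tau_0=0$ and $\tilde w_\infty=\tilde v$, which proves the first assertion.

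For the ``moreover'' part, assume the Green bundles continuous and let $u^s\le u^u$ denote the Green solutions of the scalar Riccati equation $u'+u^2+K=0$ along geodesics, so that $G^s,G^u$ are the graphs of $u^s,u^u$; these are continuous functions on $\tm$ and $R_1=\{u^u>u^s\}$. Subtracting the two Riccati equations, $W:=u^u-u^s\ge0$ satisfies the linear equation $W'=-(u^u+u^s)W$ along each geodesic, so along any orbit $W$ is either identically zero or nowhere zero. Given $w\in\fs(v)$, the first part gives $g_{t_n}w\to v$, hence $W(g_{t_n}w)\to W(v)>0$ by continuity, so $W$ is not identically zero on the orbit of $w$, i.e.\ $w\in R_1$. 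This yields $\fs(v)\subseteq R_1$, and $R_1\subseteq\mathcal{E}$ is part of the quoted theorem. The statement for $\fu(v)$ and $t\to-\infty$ follows by applying all of this to $-v$, which again lies in $R_1$ and is forward recurrent along $-t_n$, using $-\fuc(\tilde v)=\fsc(-\tilde v)$ and the invariance of $R_1$ and $\mathcal{E}$ under $v\mapsto-v$.

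The step I expect to be delicate is the two-sided bound $d(c_{\tilde w_\infty}(s),c_{\tilde v}(s))\le C$: the only a priori control on $\rho$ is a forward bound, and it is the isometry-invariance of $\rho$ under the $\gamma_n$, together with the recurrence (which slides the orbit segments back toward $\tilde v$), that upgrades this to a bound valid for all times $s$ on the limit geodesic, thereby placing $\tilde w_\infty$ in $S(\tilde v)$. The remaining ingredients are compactness and direct appeals to the structural results quoted above; note that continuity of the Green bundles enters only through the continuity of $W$ in the last paragraph.
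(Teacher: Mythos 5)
This proposition is quoted in the paper from \cite[Proposition 3.6]{GelfertRuggiero2} without proof, so your argument has to stand on its own; and there is a genuine gap at the crux of the first assertion. Your compactness/recurrence skeleton is fine: lifting to $\tuc$, pulling $g_{t_n}\tilde v$ back by deck transformations $\gamma_n$, extracting a limit $\tilde w_\infty$ of $\gamma_n g_{t_n}\tilde w$, and showing $\tilde w_\infty\in S(\tilde v)\cap \tilde H^s(\tilde v)=\tilde I(\tilde v)$ (the two-sided bound via isometry-invariance of $\rho$ and the identity $b_{\tilde v}(c_{\tilde v}(t))=-t$ are both correct). But at the decisive step you conclude $\tilde I(\tilde v)=\{\tilde v\}$ from ``$R_1\subseteq\mathcal E$ supplied by the theorem of Gelfert and Ruggiero''. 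That inclusion is stated in this paper only under the hypothesis of \emph{continuous} Green bundles, whereas the first assertion of the proposition is asserted without it; moreover $R_1\subset\mathcal E$ is literally part of the ``moreover'' conclusion of the very proposition you are proving, and in the source it sits downstream of Proposition 3.6 (it is used to build the quotient structure theorem you invoke). So your proof of the first claim is at best conditional on continuity and at worst circular. The whole content of the first statement is precisely to rule out $\tilde w_\infty\neq\tilde v$ using only the pointwise condition $G^s(v)\neq G^u(v)$, with no a priori knowledge that the strip of $v$ is trivial; that requires a quantitative input at $v$ (comparison of stable and unstable Jacobi fields, i.e.\ Riccati/divergence estimates along $c_v$, which is how Gelfert--Ruggiero argue), and this ingredient is missing from your argument.

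The remaining pieces are essentially correct and worth keeping: the observation that $S(\tilde v)\cap\tilde H^s(\tilde v)=\tilde I(\tilde v)$; the scalar Riccati computation for $W=u^u-u^s$, namely $W'=-(u^u+u^s)W$, so that $W$ is identically zero or nowhere zero along an orbit, which together with $g_{t_n}w\to v$, $W(v)>0$ and continuity of $W$ yields $H^s(v)\subset R_1$ --- this is a clean and valid derivation of that half of the ``moreover'' statement once the first part is available; and the reduction of the $H^u$ case to $-v$ via the flip symmetry. But as written, the argument assumes (through the quoted structure theorem) a statement equivalent to part of the conclusion, and it does not establish the first assertion in the stated generality without continuous Green bundles.
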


\section{A special family of measures on the horocycles}\label{measures}

\subsection{The Patterson-Sullivan measure}
Our starting point is the work of Climenhaga, Knieper and War \cite{ ClimenhagaKnieperWar21a, ClimenhagaKnieperWar21b}, which is an extension of the work of the second author for rank $1$ nonpositively curved manifolds \cite{Knieper98}. They introduce conformal densities $\{\sigma_p\}_{p\in \tilde M}$ on the boundary $\bound$ invariant by a group of covering transformations $\Gamma$ of a compact surface without conjugate points of genus greater or equal than $2$ (and also in higher dimension with two additional conditions). Then, they are able to define a measure $\mu_{BM}$ on the unit tangent bundle $\tuc$ which is proved to be the lift of the unique measure of maximal entropy.

Let $\doublebound:= (\bound\times\bound) \setminus \Delta$, where $\Delta $ denotes the diagonal of $\bound\times\bound$. Given $(\xi,\eta)\in \doublebound$ and $p\in \tilde M$, the Gromov product of $\xi $ and $\eta$ at $p$ is
$$ \langle\xi , \eta \rangle_p := \beta_{\xi}(p,x)+\beta_{\eta}(p,x), $$
where $x$ is any point on any geodesic connecting $\xi $ and $\eta$. It does not depend on the choice of $x$. 
We fix a reference point $0\in \tilde M$ and denote by $\delta$ the critical exponent of $\Gamma$, which is positive and coincides with the topological entropy of $g_t$ on $T^1 M$ \cite{FreireMane82}.

According to \cite[Theorem 5.6]{ClimenhagaKnieperWar21a}, on a compact higher genus surface without conjugate points, the lift $\mu_{BM}$ of the measure of maximal entropy for the geodesic flow $g_t$ on $T^1M$ gives full measure to $\tilde{\mathcal{E}}\subset \tuc$ and in restriction to this set it satisfies
	\begin{equation}
	\dd \mu_{BM}(v)= e^{\delta \langle v_-,v_+ \rangle_0 } \dd t \, \dd\sigma_0(v_-)\, \dd \sigma_0(v_+), \label{eq:BowenMargulis}    
	\end{equation}
	where $\dd t$ stands for the Lebesgue measure on the $g_t$ orbit going from $v_-$ to $v_+$.

We start by proving some additional properties of the Patterson-Sullivan measure $\sigma_0$. Recall that $R_1$ is the subset of $\tm$ formed by the vectors where the Green subspaces are linearly independent. Moreover, when a vector is both in $R_1$ and $g_t$-recurrent, then its horocycles are entirely contained in $R_1$ by Proposition \ref{expansive_horocycle}. Let $Rec_1$ be the set of vectors in $R_1$ which are forward and backward $g_t$-recurrent and let $\tilde{Rec}_1$ be its lift to $T^1\tilde M$. A subscript $_+$ on a subset of $\tuc$ denotes its projection to the boundary and a superscript $^c$ denotes its complement.

\begin{lemma}
	We have $\tilde{Rec}_{1+} \cap ( \tilde{R}_1^c)_+ =\emptyset$.
\end{lemma}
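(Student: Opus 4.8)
The plan is to argue by contradiction. Suppose some boundary point $\xi\in\partial\tilde M$ lies in both $\boundrecone$ and $(\tilde R_1^c)_+$. By the meaning of the subscript $_+$ and the superscript $^c$, this produces a vector $v\in\tilde{Rec}_1$ with $v_+=\xi$ and a vector $w\in\tuc$ with $w\in\tilde R_1^c$ (that is, $dp(w)\notin R_1$) and $w_+=\xi$. The goal is to show that $dp(w)$ must in fact belong to $R_1$, which is absurd.

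The key step is to locate $w$ with respect to the stable horocyclic foliation of $v$. Since $v_+=w_+$, the Busemann functions $b_v$ and $b_w$ have the same gradient field by Proposition \ref{propertiesBusemann}, hence agree up to an additive constant, so $w$ lies on the central stable leaf $\fcsc(v)=\cup_{t\in\reals}g_t\fsc(v)$; concretely there is $t_0\in\reals$ with $w\in g_{t_0}\fsc(v)=\fsc(g_{t_0}v)$, the correct $t_0$ being read off from $b_v(\pi(w))$ via the identity $b_{g_t v}=b_v+t$, while the fact that it is $w$ itself — rather than the opposite normal to the corresponding horocycle — that belongs to $\fsc(g_{t_0}v)$ follows from the uniqueness in Theorem \ref{boundary_properties}(1). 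Pushing down by $dp$ and using that horocycles are invariant under deck transformations and descend to $T^1M$, one gets $dp(w)\in H^s(g_{t_0}\,dp(v))$. Now $dp(v)\in Rec_1\subset R_1$ and is forward $g_t$-recurrent; hence $g_{t_0}\,dp(v)$ is again forward recurrent (forward recurrence is preserved by the flow, since $g_{t_0}$ commutes with $g_t$) and it still lies in $R_1$ because $R_1$ is $g_t$-invariant. Since the Green bundles are continuous by hypothesis, Proposition \ref{expansive_horocycle} applied to $g_{t_0}\,dp(v)$ gives $H^s(g_{t_0}\,dp(v))\subset R_1$, whence $dp(w)\in R_1$ — the desired contradiction. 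Therefore $\boundrecone\cap(\tilde R_1^c)_+=\emptyset$.

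I do not anticipate a serious obstacle here: once the statement is unwound, the proof is essentially bookkeeping between $\tuc$ and $T^1M$ together with a direct appeal to Proposition \ref{expansive_horocycle}, which carries all the real content. The one point genuinely worth spelling out is the first one — that two vectors with a common positive endpoint differ by a geodesic-flow time along a single stable horocycle — but this is standard for visibility manifolds and follows readily from Proposition \ref{propertiesBusemann} and Theorem \ref{boundary_properties}.
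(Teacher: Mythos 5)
Your proof is correct and follows essentially the same route as the paper: both reduce the claim to Proposition \ref{expansive_horocycle} together with the $g_t$-invariance of $R_1$, the only cosmetic differences being that you argue by contradiction and that you spell out (via Busemann functions) why every vector with positive endpoint $\xi$ lies on the central stable leaf $\fcsc(v)$, a fact the paper simply asserts. Your application of the proposition to $g_{t_0}v$ rather than to $v$ itself is equivalent to the paper's use of flow-invariance of $\tilde R_1$ after applying it to $v$.
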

\begin{proof}
	Let $\eta\in \tilde{Rec}_{1+}$ and take $v\in \tilde{Rec}_1$ such that $v_+=\eta$. By Proposition \ref{expansive_horocycle}, we know that $\fsc(v)$ is contained in the rank $1$ set, so $\fcsc(v)$ is also in the rank $1$ set by invariance. Then, since the set of vectors pointing positively to $\eta$ is exactly $\fcsc(v)$, $\eta$ is not the endpoint of a vector in $\tilde R_1^c$.
\end{proof}

Let $\sigma_0$ be a Patterson-Sullivan measure on $\bound$.

\begin{lemma}
	$\tilde{Rec}_{1+} $ has full $\sigma_0$-measure.
\end{lemma}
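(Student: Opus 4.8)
The plan is to read the statement off from the explicit formula \eqref{eq:BowenMargulis} for $\mu_{BM}$ on $\tilde{\mathcal{E}}$, by transferring the (easy) fact that $\mu_{BM}$-almost every vector of $\tuc$ lies in $\tilde{Rec}_1$ down to the boundary. This works because along a $g_t$-orbit inside $\tilde{\mathcal{E}}$ the positive endpoint $v_+$ is constant, while in \eqref{eq:BowenMargulis} the weight $e^{\delta\langle v_-,v_+\rangle_0}$ is positive and constant and the factor $\dd t$ has infinite total mass; so a $\mu_{BM}$-null set that is a union of full orbits must have a $\sigma_0$-null set of positive endpoints.

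First I would check that $\mu_{BM}(\tilde{Rec}_1^c)=0$, i.e. that the measure of maximal entropy $m$ of $g_t$ on $T^1M$ gives full measure to $Rec_1$. By uniqueness, $m$ is ergodic, so it is enough to show $m(R_1)>0$ and that $m$-almost every vector is forward and backward recurrent. The second point is Poincaré recurrence for the finite invariant measure $m$, applied to $g_t$ and to $g_{-t}$. For the first, $R_1\subset\mathcal{E}$ is $g_t$-invariant, open, dense, and saturated for $\sim$ (if $w\sim v$ with $v\in R_1$, then $w\in I(v)=\{v\}$), so $\chi(R_1)$ is a nonempty open subset of $X$; since $m(\mathcal{E})=1$, $\chi$ is injective on a set of full $m$-measure, whence $\chi_*m$ is the measure of maximal entropy of $\phi_t$, and the latter has full support because $\phi_t$ is expansive, topologically mixing and has a local product structure; therefore $m(R_1)=(\chi_*m)(\chi(R_1))>0$. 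Lifting, $\tilde{Rec}_1^c=dp^{-1}(Rec_1^c)$ is $\mu_{BM}$-null.

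Now I run the transfer. Put $Z=\bound\setminus\tilde{Rec}_{1+}$. By definition of $\tilde{Rec}_{1+}$ as the projection of $\tilde{Rec}_1$, we have $\{v\in\tuc:v_+\in Z\}\subset\tilde{Rec}_1^c$, so this set is $\mu_{BM}$-null. As $\mu_{BM}$ is carried by $\tilde{\mathcal{E}}$, where by \eqref{eq:BowenMargulis} it is the image of $\sigma_0\otimes\sigma_0\otimes\mathrm{Leb}$ under the endpoint--time parametrization, this gives
\[
0=\mu_{BM}(\{v:v_+\in Z\})=\int_{\bound}\int_{Z}e^{\delta\langle\xi,\eta\rangle_0}\left(\int_{\reals}\dd t\right)\dd\sigma_0(\eta)\,\dd\sigma_0(\xi).
\]
Since $e^{\delta\langle\xi,\eta\rangle_0}>0$ and $\int_{\reals}\dd t=+\infty$, the integrand is $+\infty$ on $\bound\times Z$, so the vanishing forces $(\sigma_0\otimes\sigma_0)(\bound\times Z)=\sigma_0(\bound)\,\sigma_0(Z)=0$, hence $\sigma_0(Z)=0$ because $\sigma_0\neq 0$. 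Thus $\tilde{Rec}_{1+}$ has full $\sigma_0$-measure.

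The main obstacle is the input $m(R_1)=1$: the transfer is a one-line consequence of \eqref{eq:BowenMargulis}, but checking that the measure of maximal entropy gives no mass to $R_1^c$ forces one to relate $m$, its image $\chi_*m$, and the dynamics of the quotient flow, in particular to use the full support of the measure of maximal entropy of $\phi_t$ on $X$. A secondary technical point, which I would quote from \cite{ClimenhagaKnieperWar21a}, is that on the full-measure set $\tilde{\mathcal{E}}$ the endpoint--time parametrization identifies $\mu_{BM}$ with $\sigma_0\otimes\sigma_0\otimes\mathrm{Leb}$ (so that $\sigma_0\otimes\sigma_0$-almost every pair of boundary points is joined by a single orbit) --- the natural companion of the quoted fact $\mu_{BM}(\tilde{\mathcal{E}}^c)=0$ that is needed to run the integration above.
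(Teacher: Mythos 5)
Your proof is correct and follows essentially the same route as the paper: establish that $\tilde{Rec}_1$ has full $\mu_{BM}$-measure (openness and $g_t$-invariance of $R_1$, ergodicity, Poincaré recurrence) and then transfer to $\sigma_0$ through the product formula for $\mu_{BM}$ on $\tilde{\mathcal{E}}$, using positivity of the density and the infinite mass of the $\dd t$ factor, exactly as in the paper (which uses $v_-$ where you use $v_+$, an immaterial difference by the symmetry of the formula). The only real divergence is your justification of $\mu_{BM}(R_1)>0$ via the quotient flow's measure of maximal entropy having full support; this is valid but a needless detour, since the paper simply quotes from Climenhaga--Knieper--War that $\mu_{BM}$ itself is ergodic and fully supported on $T^1M$.
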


\begin{proof}
	The Bowen-Margulis measure $\mu_{BM}$ is ergodic and fully supported \cite{ClimenhagaKnieperWar21a}. Since $R_1$ is open and $g_t$-invariant it has full $\mu_{BM}$-measure. By the Poincaré recurrence theorem, $\mu_{BM}$-a.e. vector is forward and backward recurrent, which yields that $Rec_1$ has full measure, and so does $\tilde{Rec}_1$.
	
	Now consider the subset $A:=\{v\in T^1 \tilde M \mid v_- \not \in \tilde{Rec}_{1+} \}$ of $T^1 \tilde M$. The expression of the Bowen-Margulis measure on $\tilde{\mathcal{E}}$ (Equation \ref{eq:BowenMargulis}) implies
	$$
	\mu_{BM}(A)=\int_{(\tilde{Rec}_{1+})^c}\int_{\partial \tilde M } \infty 
	e^{\delta (\xi |\eta)_0} \dd \sigma_0(\eta) \dd \sigma_0(\xi),
	$$
	so $A$ is negligible in $T^1 \tilde M$ if and only if 
 $\tilde{Rec}_{1+}$ has full measure in $\partial \tilde M$. Finally, we can observe that
	$$
	A=\{v\in T^1 \tilde M \mid v_- \not \in \tilde{Rec}_{1+} \} \subset \tilde {Rec}_1^c,
	$$
	which implies that $A$ is actually negligible.
\end{proof}

\subsection{Product structure of the measure of maximal entropy}

For an unstable leaf $H=H^u(v)$, let $P_H:H\to \partial \tilde M\setminus \{v_-\}$ denote the projection to the positive endpoint, which is a continuous surjective map.
For every $\eta \in \bound, \, \eta \not = v_-$, the set $P_H^{-1}(\eta)$ is an equivalence class, that is, a subset of the form $\tilde I(v)$ for $v\in P_H^{-1}(\eta)$. Moreover, we know that whenever $w\in \tilde I(v)$, then $b^v=b^w$ \cite[Lemma 2.10]{RiffordRuggiero21}.
The function $\phi_H:\eta \mapsto \exp (\delta b^{P_H^{-1}(\eta)}(0))$ is thus well defined.

We can now define a measure $\mu_H$ on $H$ by establishing that, for every Borel subset $A\subset H$,
\begin{equation}
	\mu_H(A)=\int_{\bound \setminus\{v_-\}} \mathbf{1}_{A_+}(\eta) \phi_H(\eta) \dd \sigma_0(\eta). \label{eq:meas_horocycles}
\end{equation}
Notice that the set of points in the boundary with more than one preimage by $P_H$ has zero measure, so the expression above does define a $\sigma$-additive measure and the following formula is true almost everywhere:
$$ \dd \mu_H(w)= e^{\delta b_w(0)} \dd \sigma_0(w_+). $$
Similarly we can define measures on the stable horocycles which satisfy
$$ \dd \mu_{\fsc(v)}(w)= e^{\delta b_{-w}(0)} \dd \sigma_0(w_-) .$$
Finally, for each $v\in \tuc$, we define a measure $\nu_v$ on the weak stable leaf $\fcsc(v)$ of $v$ by
$$ 
	\nu_v(A)=\int_{\reals} e^{\delta t} \int_{\fsc(g_t v) } \mathbf{1}_A(w) \dd \mu_{\fsc(g_t v)}(w) \, \dd t.
$$
This is not the usual Margulis measure on $\fcsc(v)$ which is uniformly expanded. In fact, $g_{t*}\nu_v=\nu_v$ and $\nu_{g_t v}=e^{-\delta t}\nu_v$.

\begin{proposition} \label{product_measure}
	For every vector $u\in \tuc$ and every Borel subset $A\subset \tuc \setminus \fcsc(u)$, we have
	$$ \mu_{BM}(A)= \int_{\fuc(u)} \int_{\fcsc(v)} \mathbf{1}_A (w) \, \dd \nu_v(w)\, \dd \mu_{\fuc(u)}(v) .$$
\end{proposition}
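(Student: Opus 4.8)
The plan is to disintegrate $\mu_{BM}$ using the product structure of Equation \eqref{eq:BowenMargulis} and reorganize the resulting iterated integral over the boundary into an integral first over the unstable horocycle $\fuc(u)$, then over the central stable leaves. Concretely, I would parametrize $\tuc\setminus\fcsc(u)$ (up to a $\mu_{BM}$-null set contained in $\tilde{\mathcal{E}}^c$) by the coordinates $(v_-,v_+,t)$, and note that a vector $w\notin\fcsc(u)$ has $w_-\neq u_-$, so that $w$ determines a unique vector $v\in\fuc(u)$ by $v_-=u_-$ (equivalently $v_+=w_-$, projecting $w_-$ back to the unstable horocycle through $u$ via $P_{\fuc(u)}^{-1}$), and then $w\in\fcsc(v)$. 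This gives a measurable bijection, off a null set, between $\tuc\setminus\fcsc(u)$ and $\{(v,w)\mid v\in\fuc(u),\, w\in\fcsc(v)\}$, and the proof reduces to checking that the measure $\mu_{BM}$ pushes forward to the claimed iterated integral.

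The key steps, in order: (i) Restrict attention to $\tilde{\mathcal{E}}$, which has full $\mu_{BM}$-measure, so that \eqref{eq:BowenMargulis} applies; also restrict to the full-measure set where $w_-,w_+\in\tilde{Rec}_{1+}$ using the second Lemma, so that the relevant horocycles lie in $\tilde R_1$ and the projections $P_H$ are injective $\sigma_0$-a.e. (ii) Write $\mu_{BM}(A)=\int\int\int \mathbf 1_A(w)\, e^{\delta\langle w_-,w_+\rangle_0}\,\dd t\,\dd\sigma_0(w_-)\,\dd\sigma_0(w_+)$ and split the exponential Gromov-product weight as $e^{\delta\langle w_-,w_+\rangle_0}=e^{\delta b_{-w}(x_w)}\cdot e^{\delta(\text{correction})}$ — more precisely, using the cocycle identity for Busemann functions one has $\langle w_-,w_+\rangle_0 = b_{w}(x)+b_{-w}(x)$ evaluated relative to the base point, and along the central stable leaf $\fcsc(v)$ the quantity $b_{-w}(0)$ is exactly the exponent appearing in $\dd\mu_{\fsc(g_t v)}$ together with the $e^{\delta t}$ factor in the definition of $\nu_v$. (iii) Recognize that, once $w_-=v_+$ is fixed, the remaining integration $\dd\sigma_0(w_+)\,\dd t$ with the appropriate exponential weight is precisely the definition of $\nu_v$ on $\fcsc(v)$: the $\dd t$ matches the flow direction, and $e^{\delta t}\,e^{\delta b_{-w}(0)}\,\dd\sigma_0(w_-)$ inside $\nu_v$ reassembles the Bowen–Margulis density. (iv) Finally, the outer integration $\dd\sigma_0(w_-)=\dd\sigma_0(v_+)$ with the weight $e^{\delta b_v(0)}$ is by definition $\dd\mu_{\fuc(u)}(v)$, via \eqref{eq:meas_horocycles} and the a.e. formula $\dd\mu_H(v)=e^{\delta b_v(0)}\,\dd\sigma_0(v_+)$.

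The main obstacle I expect is bookkeeping the Busemann/Gromov-product exponents correctly: one must verify that the three pieces $e^{\delta\langle w_-,w_+\rangle_0}$ from \eqref{eq:BowenMargulis}, $e^{\delta b_v(0)}$ from $\mu_{\fuc(u)}$, and $e^{\delta b_{-w}(0)}e^{\delta t}$ from $\nu_v$ are consistent, i.e. that $\langle w_-,w_+\rangle_0 = b_v(0)+b_{-w}(0)+t$ along this decomposition, where $t$ is the flow time from $\fsc(g_t v)$ back to $\fsc(v)$. This is a Busemann cocycle computation: it follows because $b_v(0)=\beta_{v_+}(0,x)$ measures the signed distance from $0$ to the stable horocycle $\fsc(v)$ (centered at $v_+=w_-$) and $b_{-w}(0)$ measures the signed distance from $0$ to the unstable horocycle through $w$ (centered at $w_-$... wait, rather at $w_+$ after the flip), while $t$ accounts for the displacement along the geodesic; summing them recovers the Gromov product by its very definition as $\beta_{w_-}(0,x)+\beta_{w_+}(0,x)$. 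A secondary but routine point is Fubini/measurability: one needs that the map $w\mapsto(v,w)$ and the weights are Borel, and that the exceptional null sets (non-expansive vectors, boundary points with multiple preimages, the leaf $\fcsc(u)$ itself) are genuinely $\mu_{BM}$-null, which is exactly what the two Lemmas of this section and Theorem on the structure of $\tilde{\mathcal{E}}$ provide. Once the exponent identity is pinned down, the rest is an application of Fubini's theorem.
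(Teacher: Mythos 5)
Your overall strategy is the same computation as the paper's, only run in the opposite direction: the paper starts from the iterated integral, rewrites it over the boundary, reassembles the Gromov product via the cocycle identity $\beta_\xi(0,\pi(v_\xi))=\beta_\xi(0,\pi(w^\xi_{t,\eta}))-t$, and integrates out $t$ to recover $Leb_{\eta,\xi}(A)$ and Equation (\ref{eq:BowenMargulis}); you start from Equation (\ref{eq:BowenMargulis}) and split the weight. Your exponent identity $\langle w_-,w_+\rangle_0=b_v(0)+b_{-w}(0)+t$ is exactly this cocycle computation and is correct, and your treatment of the exceptional sets (restriction to $\tilde{\mathcal E}$, boundary points with several $P_H$-preimages) matches the paper.

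However, your description of the fibration is wrong as written and must be corrected before the Fubini step makes sense. Vectors of $\fcsc(v)$ all share the \emph{positive} endpoint $v_+$ (the center of the stable horocycles), and along $\fsc(g_tv)$ it is the negative endpoint that varies; vectors of $\fuc(u)$ share the negative endpoint $u_-$, and $P_{\fuc(u)}$ records their positive endpoint. Hence a generic $w$ with $w_+\neq u_-$ determines $v\in\fuc(u)$ by $v_+=w_+$, i.e. $v\in P_{\fuc(u)}^{-1}(w_+)$, not by $v_+=w_-$ as you wrote: your version is self-contradictory, since $w\in\fcsc(v)$ together with $v_+=w_-$ would force $w_+=w_-$. (Likewise, the set genuinely missed by the decomposition is $\{w\mid w_+=u_-\}$, which is $\mu_{BM}$-null because $\sigma_0$ has no atoms; it is not true that $w\notin\fcsc(u)$ implies $w_-\neq u_-$.) Correspondingly, in your step (iii) the inner integration over $\fcsc(v)$ is in $\dd\sigma_0(w_-)\,\dd t$ — consistent with the formula $e^{\delta t}e^{\delta b_{-w}(0)}\,\dd\sigma_0(w_-)$ you quote for $\nu_v$, and note that $b_{-w}$ is centered at $(-w)_+=w_-$, not at $w_+$ — while in step (iv) the outer integration is in $\dd\sigma_0(w_+)=\dd\sigma_0(v_+)$. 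Once the endpoints are relabelled this way, the weights $e^{\delta b_v(0)}$ and $e^{\delta b_{-w}(0)}e^{\delta t}$ recombine into $e^{\delta\langle w_-,w_+\rangle_0}$ exactly as you intend (this is $b_v(0)=b_w(0)-t$ for $w\in\fsc(g_tv)$, the identity used in the paper), and the rest of your argument goes through and coincides with the paper's proof.
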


\begin{proof}
	For $v\in\tuc$, $\eta \in \bound\setminus (\mathcal{E}^c)_+ $ with $\eta\not =v_+$ and $t\in \reals $, let $w^v_{t,\eta}$ the unique vector in $\fsc(g_tv)$ pointing negatively to $\eta$. Also, for $\xi \in \bound\setminus (\mathcal{E}^c)_+$ let $v_\xi$ be the unique vector lying in $\fuc(u)$ pointing to $\xi$. Finally, let $w^{\xi}_{t,\eta} : = w^{v_\xi}_{t,\eta}$.
	
	We prove the proposition by carrying the following computations. First, we write the double integral in terms of the measure on the boundary. Then we apply the equality
		$$
	\beta_{\xi}(0,\pi(v_{\xi}))=\beta_{\xi}(0,\pi(w^\xi_{t,\eta}))-t
	$$
	to simplify. Finally, we integrate in $t$ the indicator function of $A$ at the point $w^{\xi}_{t,\eta}$, which is exactly the Lebesgue measure on the geodesic $(\eta,\xi)$ of the set $A$. We can restrict all the integrals to vectors in $\mathcal{E}$ or endpoints in $ \bound\setminus (\mathcal{E}^c)_+$ because they are of full measure each time, thus we do not need to worry about vectors contained in strips.

\begin{align*}
	\int_{\fuc(u)} & \int_{\fcsc(v) \cap \mathcal{E}} \mathbf{1}_A (w) \, \dd \nu_v(w)\, \dd \mu_{\fuc(u)}(v) 
	\\
	=&\int_{\fuc(u)} \int_{\reals} \int_{\fsc(g_t v) \cap \mathcal{E}} \mathbf{1}_A(w) e^{\delta t} \dd \mu_{\fsc(g_t v)}(w) \, \dd t \, \dd \mu_{\fuc(u)}(v)
	\\
	=&\int_{\fuc(u)} \int_{\reals} \int_{ \bound \setminus (\mathcal{E}^c)_+} \mathbf{1}_A (w^v_{t,\eta}) e^{\delta t} e^{\delta \beta_{\eta}(0,\pi(w^v_{t,\eta}))} \, \dd \sigma_0(\eta ) \, \dd t \, \dd \mu_{\fuc(u)}(v) 
	\\
	=&\int_{\bound \setminus (\mathcal{E}^c)_+} \int_{\reals} \int_{ \bound \setminus (\mathcal{E}^c)_+} \mathbf{1}_A (w^\xi_{t,\eta}) e^{\delta t} e^{\delta \beta_{\eta}(0,\pi(w^\xi_{t,\eta}))} e^{\delta \beta_{\xi}(0,\pi(v_{\xi}))} \, \dd \sigma_0(\eta )  \, \dd t \, \dd \sigma_0(\xi) 
	\\
	=&\int_{\bound \setminus (\mathcal{E}^c)_+} \int_{\reals} \int_{ \bound \setminus (\mathcal{E}^c)_+} \mathbf{1}_A (w^\xi_{t,\eta})  e^{\delta \langle \eta, \xi \rangle_0} \, \dd \sigma_0(\eta )  \, \dd t \, \dd \sigma_0(\xi) 
	\\
	=&\int_{\bound \setminus (\mathcal{E}^c)_+} \int_{ \bound \setminus (\mathcal{E}^c)_+} e^{\delta \langle \eta, \xi \rangle_0} Leb_{\eta,\xi}(A) \, \dd \sigma_0(\eta ) \, \dd \sigma_0(\xi) =
	\mu_{BM}(A).	
\end{align*}
	
\end{proof}

%figure

\subsection{Additional properties of the measures on the horocycles}
It is not hard to check that the family of measures $\{\mu_{H^u(v)}\}_{v\in \tuc}$ is $\Gamma$-invariant and is exponentially expanded by the geodesic flow,
$$
\mu_{g_t H}=e^{\delta t} g_{t*}\mu_H,
$$
because $b_{g_t v}(0)=t+b_{ v}(0)$.

We can also show that the measures $\mu_{\fsc(v) } $ defined by (\ref{eq:meas_horocycles}) vary continuously with $v$.

\begin{proposition}\label{continuity_mF}
		The map
	\begin{equation*}
		\begin{matrix}
			\{(v,w)\in T^1 \tilde M \times T^1\tilde M \,|\,w\in H^u (v)\} & \longrightarrow & \reals \\
			(v,w) & \longmapsto & \mu_{H^u(v)}((v,w))
		\end{matrix}
	\end{equation*}
	is continuous.
\end{proposition}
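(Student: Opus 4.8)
The plan is to express $\mu_{\fu(v)}((v,w))$ as an integral over an arc of $\bound$ against the Patterson--Sullivan measure $\sigma_0$ and then to establish joint continuity by dominated convergence.

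First I would rewrite the quantity. For $v\in\tuc$ let $P_v\colon\fu(v)\to\bound\setminus\{v_-\}$ be the positive-endpoint projection and, for $w\in\fu(v)$, let $J_{v,w}:=P_v((v,w))$ be the image of the arc $(v,w)\subset\fu(v)$; it is the compact sub-arc of the circle $\bound$ joining $v_+$ to $w_+$ and avoiding $v_-$, and it collapses to $\{v_+\}$ precisely when $w\in\cali(v)$. Since $V(\pi(u),u_+)=u$ we have $b_u(0)=\beta_{u_+}(0,\pi(u))$, so by \eqref{eq:meas_horocycles} and the ($\sigma_0$-a.e.) identity $\dd\mu_{\fu(v)}(u)=e^{\delta b_u(0)}\dd\sigma_0(u_+)$,
$$
\mu_{\fu(v)}((v,w))=\int_{\bound}\mathbf 1_{J_{v,w}}(\eta)\,\varphi_v(\eta)\,\dd\sigma_0(\eta),\qquad\varphi_v(\eta):=e^{\delta\beta_\eta(0,\pi(u))}\ \text{for any}\ u\in P_v^{-1}(\eta),
$$
the value $\varphi_v(\eta)$ being independent of the choice of $u\in P_v^{-1}(\eta)$ by \cite[Lemma 2.10]{RiffordRuggiero21}. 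I would also record that $\sigma_0$ has no atoms: an atom at some $\xi$ would give the $g_t$-invariant set $\{u\in\tilde{\mathcal E}:u_+=\xi\}$ positive $\mu_{BM}$-mass by \eqref{eq:BowenMargulis}, hence full mass by ergodicity of $\mu_{BM}$, forcing $\sigma_0=\delta_\xi$, which contradicts the full support of $\sigma_0$ on the infinite set $\bound$.

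Next I would fix a convergent sequence $(v_n,w_n)\to(v,w)$ with $w_n\in\fu(v_n)$ and prove that $\mathbf 1_{J_{v_n,w_n}}\varphi_{v_n}$ tends to $\mathbf 1_{J_{v,w}}\varphi_v$ $\sigma_0$-a.e.\ and is uniformly dominated by an integrable function. The endpoint map $\tuc\to\bound$ is continuous, so $(v_n)_\pm\to v_\pm$ and $(w_n)_+\to w_+$; combining the continuity of the foliation $\fu$ by $C^1$ curves with the properness of the horocycles (uniform for $v_n$ near $v$, a standard consequence of uniform visibility), the arc $(v_n,w_n)\subset\fu(v_n)$ converges to $(v,w)\subset\fu(v)$ in the Hausdorff metric, and in particular all these arcs have base points in one fixed compact $\mathcal K_0\subset\uc$. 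Applying the endpoint map, $J_{v_n,w_n}\to J_{v,w}$; since $J_{v,w}$ is a compact arc with $v_-\notin J_{v,w}$, for $n$ large $J_{v_n,w_n}$ lies in a fixed compact arc $K\subset\bound\setminus\{v_-\}$, and $\mathbf 1_{J_{v_n,w_n}}(\eta)\to\mathbf 1_{J_{v,w}}(\eta)$ for every $\eta\notin\{v_+,w_+\}$, hence $\sigma_0$-a.e.\ by non-atomicity. For the densities I would use the two lemmas of \S\ref{measures}: they give $(\mathcal E^c)_+\subset(\tilde{R}_1^c)_+\subset\boundrecone^{\,c}$, which is $\sigma_0$-null, so it suffices to consider $\eta\notin(\mathcal E^c)_+$ with $\eta\ne v_-$. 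For such $\eta$ in the interior of $J_{v,w}$ one has $\eta\in\inter J_{v_n,w_n}$ for $n$ large, so any $u_n^*\in P_{v_n}^{-1}(\eta)$ lies in $(v_n,w_n)$, hence has base point in $\mathcal K_0$; every limit point of $(u_n^*)$ lies in $\fu(v)$ (continuity of $\fu$) with positive endpoint $\eta$, and since $\eta\notin(\mathcal E^c)_+$ there is exactly one such vector $u^*:=P_v^{-1}(\eta)$, whence $u_n^*\to u^*$ and, $\beta$ being continuous on $\bound\times\tuc\times\tuc$, $\varphi_{v_n}(\eta)=e^{\delta\beta_\eta(0,\pi(u_n^*))}\to e^{\delta\beta_\eta(0,\pi(u^*))}=\varphi_v(\eta)$; for $\eta\notin\overline{J_{v,w}}$ the integrand is eventually $0$ and needs no attention. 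Finally, whenever $\mathbf 1_{J_{v_n,w_n}}(\eta)\ne0$ the base point of any $u\in P_{v_n}^{-1}(\eta)$ is in $\mathcal K_0$, so $\varphi_{v_n}(\eta)\le C:=\sup\{e^{\delta\beta_\xi(0,x)}:\xi\in\bound,\ x\in\mathcal K_0\}<\infty$ and $\mathbf 1_{J_{v_n,w_n}}\varphi_{v_n}\le C\,\mathbf 1_K\in L^1(\sigma_0)$. Dominated convergence then gives $\mu_{\fu(v_n)}((v_n,w_n))\to\mu_{\fu(v)}((v,w))$.

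The main obstacle is the non-injectivity of the projection $P_v$, which occurs exactly along the strips $\cali(\cdot)$ and prevents $\eta\mapsto P_v^{-1}(\eta)$ from being a bona fide continuous map. This is circumvented because the two lemmas of \S\ref{measures} show that the set $(\mathcal E^c)_+$ of strip endpoints is $\sigma_0$-negligible, so $\varphi_v$ coincides off a null set with a genuinely continuous function of $(v,\eta)$, and outside the strips the uniqueness of the vector in a horocycle with a prescribed positive endpoint restores the convergence $u_n^*\to u^*$. The remaining two points are routine: the non-compactness of $\uc$ is dealt with by keeping the arcs $J_{v_n,w_n}$ uniformly away from the centers $(v_n)_-$, which confines all base points entering the Busemann cocycle to the compact set $\mathcal K_0$, and the behaviour of $\sigma_0$ at the two moving endpoints $v_+,w_+$ is controlled by the non-atomicity established at the outset.
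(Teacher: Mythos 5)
Your proof is correct, and its skeleton is the same as the paper's: write $\mu_{H^u(v)}((v,w))$ as an integral of the density $\phi_{H^u(v)}$ over the boundary arc $[v_+,w_+]$, control the wobble of the two moving endpoints by non-atomicity of $\sigma_0$, and conclude by dominated convergence with a locally uniform bound on the density. The genuine difference is in how the density's regularity is handled. The paper simply asserts that $(v',\eta)\mapsto\phi_{H^u(v')}(\eta)$ is jointly continuous and uses this both for the pointwise convergence and for the uniform bound $C$; the delicate case is $\eta$ an endpoint of a strip, where the preimage $P_{H}^{-1}(\eta)$ is an interval (continuity does hold there because the Busemann function is constant along strips and strips have uniformly bounded width, but the paper does not spell this out). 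You avoid this issue entirely: you only prove $\sigma_0$-a.e.\ convergence of the densities, using the two lemmas of \S\ref{measures} to discard the $\sigma_0$-null set of strip endpoints, where uniqueness of the preimage restores convergence of the representative vectors, and you get the domination from the confinement of base points to a compact set; this is slightly longer but more self-contained, whereas the paper's version is shorter and yields a locally uniform statement. Two minor remarks: your non-atomicity argument should conclude that $\sigma_0$ is concentrated on the countable orbit $\Gamma\xi$ (ergodicity of $\mu_{BM}$ lives on $T^1M$, so the invariant set is the one of vectors with forward endpoint in $\Gamma\xi$), which still contradicts full support — or one can simply quote the shadow lemma of Climenhaga--Knieper--War, as the paper does in Proposition \ref{properties_muF}; and your Hausdorff-convergence step for the arcs $(v_n,w_n)$ is exactly the kind of foliation-continuity fact the paper also takes for granted, so it is an acceptable level of rigor here.
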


\begin{proof} We want to show the continuity at $(v,w)$, $w\in H^u(v)$. The map $(v',\eta)\in T^1\tilde M\times \partial\tilde M \mapsto \phi_{H^u(v')}(\eta)$ is continuous, so it is bounded by a constant $C$ if we restrict $v'$ to a small enough neighborhood of $v$ and $\eta $ to a relatively compact neighborhood of $[v_+,w_+]$ in $\partial \tilde M\setminus\{v_-\}$. The difference in measure with and interval $(v',w')$ close to $(v,w)$ is 
	$$
	|\mu_{H^u(v)}((v,w))-\mu_{H^u(v')}((v',w'))|\le$$
	$$\le \int_{[v_+,w_+]} 
	|\phi_{H^u(v)}-\phi_{H^u(v')} |\dd \sigma_0 + C \sigma_0([v_+,w_+]\triangle {[v'_+,w'_+]}).
	$$
	
	The second term clearly goes to $0$ when $(v',w')$ approaches $(v,w)$. In the first one, $\phi_{H^u(v')}$ converges pointwise to $\phi_{H^u(v)}$ when $v'\to v$. Moreover, $\phi_{H^u(v')}|_{[v_+,w_+]}$ are dominated by the constant $C$. So the first integral tends to $0$ by dominated convergence.

\end{proof}

To prove one of the next properties we will need the following lemma. For each $v\in\tuc$, we define $l(v)\in [0,+\infty]$ to be the length of the interval $\pi(\tilde I(v))=\pi(\fsc(v))\cap \pi(\fuc(v))$. 

\begin{lemma}\label{lemma_bound_l}
	The function $l$ is bounded by a constant $R>0$.
\end{lemma}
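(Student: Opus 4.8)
The plan is to exploit the fact that the interval $\tilde I(v)$ is a piece of biasymptotic geodesics, so its length in the base is controlled by the universal Hausdorff-distance constant $Q=Q(M)$ from the strip lemma of Rifford–Ruggiero, together with the compactness of $M$. First I would fix $v\in\tuc$ with $l(v)>0$ (otherwise there is nothing to prove) and recall that $\pi(\tilde I(v))$ is the arc of the curve $c\colon[a,b]\to T^1\tilde M$ provided by the strip lemma, whose projection to $\tilde M$ is an arc of an equidistant curve to $c_v$. The endpoints of the strip $S(v)$ are two biasymptotic geodesics $c_{v_0}$ and $c_{v_1}$ with $v_0,v_1\in\tilde I(v)$, and every geodesic $c_w$, $w\in\tilde I(v)$, lies in the ``band'' between them; in particular $d(c_w(t),c_v(t))\le Q$ for all $t$ and all $w\in \tilde I(v)$.

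Next I would estimate the length of $\pi(\tilde I(v))$ directly. The curve $\pi\circ c$ runs from $\pi(v_0)$ to $\pi(v_1)$ inside the horocycle $b_v^{-1}(0)$, which is a $C^{1,L}$ curve, so its length is comparable to the distance $d(\pi(v_0),\pi(v_1))$ up to a factor depending only on the Lipschitz constant $L$ of the Busemann derivatives (Proposition \ref{propertiesBusemann}). Since $\pi(v_0)$ and $\pi(v_1)$ both lie on the (initial point of the) geodesics of the strip at ``time $0$'', and these two geodesics stay within Hausdorff distance $Q$, I get $d(\pi(v_0),\pi(v_1))\le$ some explicit function of $Q$ plus the at-most-$Q$ correction coming from the fact that the two geodesics need not be parametrized in phase; more concretely, there is $t_1$ with $d(\pi(v_0),c_{v_1}(t_1))\le Q$ and $|t_1|$ is itself bounded by $2Q$ (the two points project to the same horocycle $b_v^{-1}(0)$, and Busemann functions are $1$-Lipschitz with $|b_v(c_{v_1}(t_1))-b_v(\pi(v_1))|=|t_1|\le d(\pi(v_0),c_{v_1}(t_1))+d(\pi(v_0),\pi(v_1))$), so everything closes up into a bound $l(v)\le R$ with $R=R(Q,L)$ a universal constant. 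An alternative, perhaps cleaner, route: $\tilde I(v)$ injects into $\tuc$ via the quotient-independent identification, and the quotient space $X$ is compact with the image of each interval being a point, so a continuity/compactness argument bounds $l$ uniformly — but I prefer the direct geometric estimate since it gives an explicit constant.

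The main obstacle I anticipate is the ``phase mismatch'' between the two boundary geodesics of the strip: the Hausdorff bound $Q$ controls $d(c_{v_0}(\mathbb{R}), c_{v_1}(\mathbb{R}))$ as sets, but to bound the length of the \emph{horocyclic} arc $\pi(\tilde I(v))$ I must compare points that lie on the \emph{same} horosphere $b_v^{-1}(0)$, not points at equal geodesic time. Handling this requires using that Busemann functions are $C^1$ with uniformly Lipschitz gradient, so moving along $c_{v_1}$ by a bounded geodesic time returns to the horosphere, and this bounded time is itself controlled by $Q$ via the $1$-Lipschitz property of $b_v$. Once this reparametrization is made quantitative, the bound $l(v)\le R$ follows, with $R$ depending only on the geometry of $M$ (through $Q$ and the curvature lower bound), and in particular independent of $v$; by $\Gamma$-invariance of horocycles the same $R$ works on $\tm$.
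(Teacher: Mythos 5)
There is a genuine gap at the step where you convert a bound on the distance between the endpoints into a bound on the \emph{arc length} of $\pi(\tilde I(v))$. The claim that a $C^{1,L}$ curve (equivalently, a curve with geodesic curvature bounded by $L$) has length comparable to the distance between its endpoints, up to a factor depending only on $L$, is false in general: circles or spirals of bounded curvature have arbitrarily long arcs with close endpoints, and even for horocycles in the hyperbolic plane the chord--arc relation is $s=2\sinh(d/2)$, not a linear comparison. In the no-conjugate-points setting no uniform chord--arc estimate for horocycles is available a priori, and nothing in Proposition \ref{propertiesBusemann} supplies one; so your ``direct geometric estimate'' bounds only the diameter of $\pi(\tilde I(v))$ (which you do correctly, modulo a slightly garbled inequality: since $b_{v_0}=b_{v_1}=b_v$ on the strip and $b_v$ is $1$-Lipschitz with $b_v(\pi(v_0))=b_v(\pi(v_1))=0$, the time shift satisfies $|t_1|=|b_v(c_{v_1}(t_1))|\le Q$, giving $d(\pi(v_0),\pi(v_1))\le 2Q$), but not its length, which is the quantity $l(v)$ actually measures.

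The paper's proof avoids this issue entirely: it observes that $l$ is upper semi-continuous and $\Gamma$-invariant, so by compactness of $T^1M$ it suffices to rule out $l(v)=+\infty$; an infinite interval would contain vectors $w\in\fuc(v)\cap\fsc(v)$ with $d(\pi(v),\pi(w))>2Q$, and then the computation you essentially reproduce ($d(\pi(v),\gamma_w(t))>Q$ both for $|t|\le Q$ by the triangle inequality and for $|t|>Q$ via $|b^v(\gamma_w(t))|=|t|$) contradicts the Morse constant $Q$. In other words, uniformity is obtained ``softly'' from semicontinuity and compactness, and only a diameter bound on strips is ever needed. Your parenthetical alternative (``the quotient $X$ is compact \dots\ a continuity/compactness argument bounds $l$ uniformly'') is in fact the route the paper takes, but as written you neither develop it nor prove the semicontinuity; to repair your argument you should either carry out that compactness/semicontinuity reduction, or supply an actual proof that horocyclic arc length is controlled by a function of the endpoint distance, which is a substantially harder (and here unnecessary) estimate.
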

\begin{proof}
	The function $l$ is upper semi-continuous and $\Gamma$-invariant, so the claim is reduced to showing that $l$ cannot take the value $+\infty$.
	
	The fact that $l(v)$ is infinity implies that we can find vectors $w$ in $ \fuc(v)\cap \fsc(v)$ at arbitrarily large Sasaki distance from $v$. As we now explain, this contradicts the following consequence of Morse's theorem \cite{Morse24}: there is a constant $Q>0$ which bounds the Hausdorff distance between any two biasymptotic geodesics. 
	It is enough to consider $w\in \fuc(v)\cap \fsc(v)$ such that $d(\pi(v),\pi(w))>2Q$. The geodesics $\gamma_v$ and $\gamma_w$ are biasymptotic, but $d(\pi(v),\gamma_w(t))\ge d(\pi(v),\pi(w))-|t|>Q$ for $|t|\le Q$ and $d(\pi(v),\gamma_w(t))\ge |b^{v}(\gamma_w(t))| =|b^{w}(\gamma_w(t))|>Q$ for $|t|>Q$.
	
\end{proof}

Finally, we can show the following properties of the measures on the horocycles.

\begin{proposition}\label{properties_muF}
	For every leaf $H=\tilde H^u(v)$, the measure $\mu_{H}$ satisfies the following:
	\begin{enumerate}
		\item it has no point masses,
		\item it is finite on compact subsets,
		\item for every $w\in H$, $v\not \in \supp \mu_H $ if and only if $w\in \inter \tilde I(v)$,  
		\item it gives infinite measure to half-horocycles. 
	\end{enumerate}
\end{proposition}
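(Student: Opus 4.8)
The plan is to read all four statements off the defining identity $\mu_H(A)=\int_{P_H(A)}\phi_H\,\dd\sigma_0$, valid for any Borel arc $A\subseteq H$, using that $\phi_H$ is continuous and strictly positive on $\bound\setminus\{v_-\}$ (continuity was established in the proof of Proposition~\ref{continuity_mF}), together with two properties of the Patterson--Sullivan measure: $\sigma_0$ is atomless, and $\supp\sigma_0=\bound$. The second is standard and also follows from the full support of $\mu_{BM}$ via~(\ref{eq:BowenMargulis}); for the first I would argue by contradiction, assuming $\sigma_0(\{\xi\})=m>0$ for some $\xi\in\bound$. Then $\Gamma$-conformality of the density gives $\sigma_0(\{\gamma\xi\})=e^{-\delta\beta_\xi(\gamma^{-1}0,\,0)}\,m$ for every $\gamma\in\Gamma$, and choosing $\gamma_k\in\Gamma$ with $\gamma_k^{-1}0\to\xi$ — possible since the orbit $\Gamma 0$ accumulates on all of $\bound$ — makes $\beta_\xi(\gamma_k^{-1}0,0)=b_{V(0,\xi)}(\gamma_k^{-1}0)$ tend to $-\infty$ (a Busemann function tends to $-\infty$ towards its centre), so $\sigma_0(\{\gamma_k\xi\})\to+\infty$, contradicting finiteness of $\sigma_0$.

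Items~1 and~2 are then immediate. For~1: given $w\in H$, $\mu_H(\{w\})=\phi_H(w_+)\,\sigma_0(\{w_+\})=0$. For~2: if $K\subseteq H$ is compact then $P_H(K)$ is a compact subset of $\bound\setminus\{v_-\}$, on which $\phi_H\le C$ for some $C$, so $\mu_H(K)\le C\,\sigma_0(\bound)<+\infty$. For~3, the key observation is that $P_H$ is a monotone continuous surjection of the line $H$ onto $\bound\setminus\{v_-\}$ whose fibers are the intervals $\cali(w)$; hence an arc $U\subseteq H$ is mapped onto an arc $P_H(U)$ that degenerates to a point exactly when $U$ is contained in the fiber $P_H^{-1}(w_+)=\cali(w)$ through any $w\in U$, and since $\sigma_0$ is atomless and fully supported and $\phi_H>0$ this means $\mu_H(U)=0$ in that case and $\mu_H(U)>0$ otherwise. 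Therefore $w\notin\supp\mu_H$ if and only if $w$ has a neighbourhood in $H$ contained in $\cali(w)$, i.e. if and only if $w$ lies in the relative interior of its interval $\cali(w)$.

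Item~4 is the one requiring a genuine idea, because the measures $\mu_{\fuc(v)}$ are not uniformly expanded by the geodesic flow, so there is no explicit density to integrate up to the centre of the horocycle as in constant curvature. Instead the plan is to exploit the $\Gamma$-invariance of the family $\{\mu_{\fuc(\cdot)}\}$, recurrence of the geodesic flow in the compact manifold $\tm$, and the continuity of Proposition~\ref{continuity_mF}. Write $H=\fuc(\tilde v)$, let $\gamma_{\tilde v}\colon\reals\to\tuc$ be the arc-length parametrization of $H$ with $\gamma_{\tilde v}(0)=\tilde v$ (it commutes with the orientation-preserving action of $\Gamma$, so $\alpha\circ\gamma_{\tilde v}=\gamma_{\alpha\tilde v}$), and suppose for contradiction that the half-horocycle $\gamma_{\tilde v}([0,+\infty))$ has finite $\mu_H$-measure; the other half is handled identically with $t_n\to-\infty$. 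The projected curve $t\mapsto p(\gamma_{\tilde v}(t))$ takes values in the compact set $\tm$, so along some sequence $t_n\to+\infty$ it converges in $\tm$, and choosing suitable $\alpha_n\in\Gamma$ one gets $\tilde u_n:=\alpha_n^{-1}\gamma_{\tilde v}(t_n)\to\tilde u$ for some $\tilde u\in\tuc$. Then $\alpha_n^{-1}$ sends $\gamma_{\tilde v}([t_n,+\infty))$ onto $\gamma_{\tilde u_n}([0,+\infty))$, a half-horocycle of $\fuc(\tilde u_n)$ based at $\tilde u_n$, and $\Gamma$-invariance of the measures gives $\mu_{\fuc(\tilde u_n)}(\gamma_{\tilde u_n}([0,+\infty)))=\mu_H(\gamma_{\tilde v}([t_n,+\infty)))$, whose right-hand side tends to $0$ as $t_n\to+\infty$, being the tail of a finite measure.

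To reach a contradiction I would fix $\ell>0$ so large that $\gamma_{\tilde u}(\ell)\notin\cali(\tilde u)$ — possible because $\cali(\tilde u)$ is a bounded arc, cf.\ Lemma~\ref{lemma_bound_l} — so that $\tilde u$ and $\gamma_{\tilde u}(\ell)$ lie in different $P_H$-fibers, $P_H((\tilde u,\gamma_{\tilde u}(\ell)))$ is a nondegenerate arc, and hence $\mu_{\fuc(\tilde u)}((\tilde u,\gamma_{\tilde u}(\ell)))>0$. By Proposition~\ref{continuity_mF} and continuity of the horocyclic flow, $\mu_{\fuc(\tilde u_n)}((\tilde u_n,\gamma_{\tilde u_n}(\ell)))\to\mu_{\fuc(\tilde u)}((\tilde u,\gamma_{\tilde u}(\ell)))>0$, and since $(\tilde u_n,\gamma_{\tilde u_n}(\ell))\subseteq\gamma_{\tilde u_n}([0,+\infty))$ the left-hand side of the displayed identity stays bounded below by a positive constant for large $n$, contradicting that it tends to $0$. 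Hence every half-horocycle has infinite $\mu_H$-measure. I expect this last item to be the main obstacle; items 1--3 are soft consequences of the definition of $\mu_H$ together with the atomlessness and full support of $\sigma_0$, whereas item~4 is exactly the place where the failure of a uniformly expanding parametrization has to be circumvented, here by trading the missing explicit formula for recurrence plus the continuity and invariance of the measure family.
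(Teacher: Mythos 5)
Your proposal follows essentially the same route as the paper: items 1--3 are read off the defining formula (\ref{eq:meas_horocycles}) using atomlessness and full support of $\sigma_0$ together with the continuity and positivity of $\phi_H$, and item 4 is the same argument as the paper's --- assuming finiteness, the tail (resp.\ consecutive-arc) measures tend to $0$, compactness of $T^1M$ and $\Gamma$-invariance produce a convergent sequence of based half-horocycles, and Proposition \ref{continuity_mF} combined with the strip bound of Lemma \ref{lemma_bound_l} gives an interval of positive measure that contradicts the vanishing. The only point where you diverge is that you re-prove atomlessness of $\sigma_0$ via conformality instead of citing the shadow lemma of Climenhaga--Knieper--War as the paper does; there your justification that $\beta_\xi(\gamma_k^{-1}0,0)\to-\infty$ merely because $\gamma_k^{-1}0\to\xi$ in the cone topology is not automatic for surfaces without conjugate points (cone-topology convergence does not obviously control Busemann values). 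This is easily repaired: by cocompactness choose $\gamma_k$ with $d\bigl(\gamma_k^{-1}0,\,c_{V(0,\xi)}(k)\bigr)\le \operatorname{diam}(M)$, so that the $1$-Lipschitz property of $b_{V(0,\xi)}$ gives $\beta_\xi(\gamma_k^{-1}0,0)\le -k+\operatorname{diam}(M)\to-\infty$; alternatively, simply invoke the shadow lemma as in the paper. With that one fix, the argument is complete and matches the paper's proof in structure.
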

\begin{proof}
	1. It is true because $\sigma_0$ has no point masses. The latter is a consequence of the shadow lemma \cite[Proposition 5.4(b)]{ClimenhagaKnieperWar21a}, which says that any point has neighborhoods of arbitrarily small measure. 
	
	2. It is true because $\phi_H$ is a continuous function on $\bound\setminus{\{v_-\}}$, so it is bounded on compact subsets.
	
	3. If $w\in H$ is in the interior of $\tilde I(w)$ with respect to the topology on the leaf $H$, since $\mu_{H}(\inter \tilde I(w))=0$, we see that $w$ is not in the support of $\mu_H$. Conversely, suppose that there is an open interval containing $w$ with zero $\mu_H$-measure. If $u\in U\setminus \tilde I(w)$, then $u\not \in \fs(w)$ and actually $w_+$ and $u_+$ are distinct. A simple computation,
	$$
	\mu_H(U)\ge \int_{[w_+,u_+]} \phi_H(\eta) \dd \sigma_0(\eta)>0,
	$$
	yields a contradiction. So	we have proved that $U\subset \tilde I(w)$ and the statement follows. 
	
	4. Consider the horocyclic flow $h_t$ with the Lebesgue parametrization. Let $R>0$ be the bound obtained in Lemma \ref{lemma_bound_l}.
	Assume that 
	$$
	\mu_H(\{h_t(v)\}_{t\ge0})= \sum_{k=0}^{+\infty}\mu_H([ h_{2Rk}(v),h_{2R(k+1)}(v))) <+\infty.
	$$
	Then $\mu_H([ h_{2Rk}(v),h_{2R(k+1)}(v)))\to 0$ when $k\to+\infty$. Let $w$ be an accumulation point of $h_{2Rk}(v)$. By continuity of the measure we get that $\mu_{\fu(w)} ((w,h_{2R}(w)))=0$. But this means that $(w,h_{2R}(w))\subset I(w)$, so $l(w)\ge 2R$, which contradicts the lemma. 
\end{proof}

	\subsection{Measures on the quotient $X$}

We now turn our attention to the quotient $X$, which is where the next arguments will take place. Recall that $X$ is defined as the quotient of $T^1M $ by an equivalence relation whose classes $[v]$ are the intervals $I(v)$. We can also define the quotient $\tilde X $ of $\tuc $ by the equivalence relation which identifies elements in the same interval $\tilde I(v)$. The group $\Gamma $ acts naturally on $\tilde X$ since $\gamma \tilde I(v) = \tilde I(\gamma v)$, and $X$ can be thought as its quotient space.

	We now bring horocycles and their measures to the quotients. We write 	$$V^u([v])=\chi(\fu(v)),\, \tilde{V}^u([v])=\chi(\fuc(v)) $$
	for the quotient horocycles, and then we push the measures,
	$$
	 \mu_{V^s([v])}=\chi_*\mu_{\fu(v)},\,	\mu_{\tilde{V}^s([v])}=\chi_*\mu_{\fuc(v)}.
	 $$

The curves $V^u(\theta)$ form a continuous foliation of $X$ because the charts used in \cite[Lemma 4.4]{GelfertRuggiero2} to show the topological structure of $X$ are in fact foliated charts of $V^u$. Next we transfer the properties of the previous sections about the measures on the horocycles to the quotient.

\begin{proposition}\label{measures_quotient}
	\begin{enumerate}
		\item For all $\theta \in \tilde X$, $\mu_{\vc(\theta)}$ has no point masses.
		\item For all $\theta \in \tilde X$, $\mu_{\vc(\theta)}$ is finite on compact subsets.
		\item For all $\theta \in \tilde X$, $\supp \mu_{\vc(\theta)}=\vc(\theta)$.
		
		\item For all $\theta \in \tilde X$, $\mu_{\vc(\theta)}$ gives infinite measure to half-horocycles.
		
		\item For all $\theta \in \tilde X$, for all $t\in \reals$, for all $\gamma\in \Gamma$, we have
		
		$$
		\mu_{\vc(\phi_t(\theta))}=e^{\delta t} (\phi_t)_*\mu_{\vc(\theta)}
		\quad \mu_{\vc(\gamma(\theta))}=\gamma_* \mu_{\vc(\theta)}.
		$$
		
		\item The map
		\begin{equation*}
			\begin{matrix}
				\{(\theta ,\beta)\in \tilde X \times \tilde X \,|\,\beta \in \vc (\theta)\} & \longrightarrow & \reals \\
				(\theta,\beta) & \longmapsto & \mu_{\vc(\theta)}((\theta,\beta))
			\end{matrix}
		\end{equation*}
		is continuous.
		
	\end{enumerate}
\end{proposition}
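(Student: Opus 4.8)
The plan is to transfer each property from the corresponding statement about the horocyclic measures $\mu_{\tilde H^u(v)}$ (Propositions \ref{properties_muF} and \ref{continuity_mF}) through the quotient map $\chi$. The central observation is that the restriction of $\chi$ to a single unstable horocycle $\tilde H^u(v)$ is a monotone, continuous, surjective map onto $\tilde V^u([v])$ whose point preimages are exactly the intervals $\tilde I(w) = \tilde H^u(v) \cap \tilde H^s(w)$; and since $\mu_{\tilde H^u(v)}$ assigns zero mass to every such interval (it has no atoms and gives each nontrivial interval measure zero, as $\tilde I(w)$ lies in a stable horocycle and has distinct... no: more directly, $\mu_H(\inter \tilde I(w)) = 0$ was established in the proof of Proposition \ref{properties_muF}(3)), the pushforward $\chi_*\mu_{\tilde H^u(v)}$ is well-behaved. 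Concretely, $\chi$ collapses each interval to a point of zero mass, so no atoms are created; this immediately gives (1). Properties (2) and (4) follow because $\chi$ restricted to the horocycle is a continuous surjection which is finite-to-one on the complement of the (measure-zero) union of nontrivial intervals, and it is proper in the relevant sense: a compact subset of $\tilde V^u([v])$ pulls back to a compact subset of $\tilde H^u(v)$, while a half-horocycle in $\tilde V^u([v])$ pulls back to a set containing a half-horocycle in $\tilde H^u(v)$ (up to removing at most one interval), and half-horocycles have infinite $\mu_H$-measure by Proposition \ref{properties_muF}(4).

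For (3), I would argue that the support of the pushforward is the closure of the image of the support: since $\supp\mu_H = \tilde H^u(v)$ by Proposition \ref{properties_muF}(3) (noting that $\tilde I(v)$ has empty interior when we work with the horocycle $H = \tilde H^u(v)$ containing $v$ only through genuinely distinct endpoints — actually part 3 of Proposition \ref{properties_muF} shows $v \notin \supp\mu_H$ iff $w \in \inter\tilde I(v)$, so the support may miss interval interiors), the image $\chi(\supp\mu_H)$ is dense in $\tilde V^u([v])$, and since $\chi$ is a closed map on the horocycle (continuous image of a suitable closed set, using that $\tilde X$ is a manifold and the foliation charts are foliated for $V^u$), we get $\supp\chi_*\mu_H = \tilde V^u([v])$. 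Here I would lean on the fact that collapsing an interval to a point does not remove it from the support of the image measure precisely because neighborhoods of the collapsed point pull back to neighborhoods of the interval, which have positive $\mu_H$-measure (the computation $\mu_H(U) \ge \int_{[w_+,u_+]}\phi_H\,\dd\sigma_0 > 0$ from the proof of Proposition \ref{properties_muF}(3)). Property (5) is a direct computation: $\chi$ semiconjugates $g_t$ to $\phi_t$ and intertwines the $\Gamma$-actions, so the stated equivariance of $\mu_{\tilde V^u(\theta)}$ follows by pushing forward the already-established relations $\mu_{g_t H} = e^{\delta t} g_{t*}\mu_H$ and $\Gamma$-invariance through $\chi$.

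Property (6), the continuity of $(\theta,\beta) \mapsto \mu_{\tilde V^u(\theta)}((\theta,\beta))$, is the one I expect to be the main obstacle, and I would deduce it from Proposition \ref{continuity_mF}. The idea is that an oriented arc $(\theta,\beta)$ in $\tilde V^u(\theta)$ lifts to an oriented arc $(v,w)$ in $\tilde H^u(v)$ for some (any) lift $v$ of $\theta$ and the corresponding endpoint $w$ with $\chi(w) = \beta$ (choosing, say, $w$ to be the endpoint of the interval $\chi^{-1}(\beta)$ nearest to $v$ in the horocycle order, which depends continuously on the data), and then $\mu_{\tilde V^u(\theta)}((\theta,\beta)) = \mu_{\tilde H^u(v)}((v,w))$ because the intervals have zero $\mu_H$-mass. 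The delicate point is the continuous dependence of this lift: I would need to show that the map sending $(\theta,\beta)$ to $(v,w)$ — after fixing a local continuous section of $\chi$ on a foliation chart, which exists because the charts of \cite[Lemma 4.4]{GelfertRuggiero2} are foliated for $V^u$ — is continuous, using that the equivalence classes $\tilde I(\cdot)$ vary upper-semicontinuously and have length bounded by $R$ (Lemma \ref{lemma_bound_l}). Combining this continuous lift with Proposition \ref{continuity_mF} then yields (6). I would organize the write-up by proving (1), (2), (4), (5) quickly as pushforward bookkeeping, then spend the bulk of the argument on (3) and (6), where the interplay between the topology of $\tilde X$ and the measure-zero collapsed intervals does the real work.
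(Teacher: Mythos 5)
Parts (1)--(5) of your proposal are correct and follow essentially the same pushforward bookkeeping as the paper: atoms are killed because each interval $\cali(w)$ has zero $\mu_{\fuc(v)}$-mass, compact sets and half-horocycles are handled via properness of $\chi$ and Proposition \ref{properties_muF}, full support comes from the fact that a quotient-open set pulls back to a neighbourhood of an interval whose endpoints lie in $\supp\mu_{\fuc(v)}$, and (5) is direct equivariance.

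The gap is in your plan for (6). You propose to build a \emph{continuous} lift $(\theta,\beta)\mapsto(v,w)$, either by taking $w$ to be the endpoint of $\chi^{-1}(\beta)$ nearest to $v$ or by invoking a local continuous section of $\chi$ on a foliated chart; neither exists in the presence of nontrivial intervals, which is exactly the situation the paper must handle. Indeed, $\chi$ restricted to a horocycle $\fuc(v)$ is a monotone collapse of closed intervals: if $\chi^{-1}(\beta)=[a,b]$ is nontrivial and $\beta_k\to\beta$ with $\beta_k\neq\beta$, the unique (or nearest-endpoint) preimages $w_k$ approach $a$ from one side and $b$ from the other, so the ``nearest endpoint to $v$'' assignment jumps by at least the length of $[a,b]$ when $\beta$ is approached from the far side; the same jump argument shows $\chi$ admits no continuous local section along $\vc(\theta)$ across such an interval (a right inverse of a non-injective monotone map cannot be continuous at a collapsed interval). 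So the step ``show the lift is continuous'' cannot be carried out as stated. The fix is the observation you already make in passing but do not exploit fully: since intervals have zero mass, $\mu_{\vc(\theta)}((\theta,\beta))=\mu_{\fuc(v)}((v,w))$ for \emph{any} representatives $v\in\theta$, $w\in\beta$, so no canonical or continuous choice is needed. The paper then argues by contradiction: given $(\theta_k,\beta_k)\to(\theta,\beta)$ with the measures bounded away from the limit value, pick arbitrary lifts $v_k\in\theta_k$, $w_k\in\beta_k$, extract convergent subsequences $v_k\to v$, $w_k\to w$ (possible since the classes stay in a bounded region), use closedness of the equivalence classes under $\chi$ to conclude $v\in\theta$, $w\in\beta$, and apply Proposition \ref{continuity_mF} to get $\mu_{\fuc(v_k)}((v_k,w_k))\to\mu_{\fuc(v)}((v,w))=\mu_{\vc(\theta)}((\theta,\beta))$, a contradiction. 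With that replacement your argument for (6) goes through; as written, it relies on a lift that is genuinely discontinuous.
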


\begin{proof}
	1. We have $\mu_{\vc([v])}([w])=\mu_{\fuc(v)}(\cali(w))=0$, because $\cali(w)$ projects to a single point in the boundary.
	
	2. Since $\chi$ is proper, the preimage of a compact set $K$ in $\vc([v])$ is compact and $\mu_{\vc([v])}(K)=\mu_{\fuc(v)}(\chi^{-1}(K))$ is finite by Proposition \ref{properties_muF}.
	
	3. Let $U$ be an open nonempty subset of $\vc([v])$. Then $\chi^{-1}(U)$ is an open neighborhood of $\cali(w)$ in $\fuc(v)$, where $[w]$ is a point in $U$. By Proposition \ref{properties_muF}, the two ends of the interval $\cali(w)\subset \chi^{-1}(U)$ are in the support of $\mu_{\fuc(v)}$, hence $\mu_{\fuc(v)}(\chi^{-1}(U))=\mu_{\vc([v])}(U)>0$.
	
	4. The positive half-horocycle of $\theta $ is given by $\chi(\{h_t(v)\}_{t\ge0})$, where $v$ is any vector in $\theta$. Then this implies
	$$
	\mu_{\vc(\theta )}(\chi(\{h_t(v)\}_{t\ge0}))=\mu_{\fuc(v)}(\{h_t(v)\}_{t\ge0}\cup \cali (v))=
	\mu_{\fuc(v)}(\{h_t(v)\}_{t\ge0})=+\infty.
	$$
	Analogously, we show that the negative half-leaf $\chi(\{h_t(v)\}_{t\le0})$ has infinite measure.
	
	5. This follows directly from the corresponding properties for $\{\mu_{H^u(v)}\}_{v\in \tuc }$.
	
	6. For every $\theta\in \tilde X$ and every $\beta \in \vc(\theta)$, note that $\mu_{\vc(\theta) }((\theta,\beta))=\mu_{\fuc(v)}((v,w))$ for any $v\in \theta $ and any $w\in \beta$. 
	
	We proceed by contradiction: suppose that the map is not continuous at $(\theta,\beta)$. Then there exists two sequences $(\theta_k)_{k\in \naturals}$ and $(\beta_k)_{k\in \naturals}$ converging to $\theta $ and $\beta$, respectively, with $\beta_k\in \vc(\theta_k)$ and such that 
	
	\begin{equation}
		\forall k\quad |\mu_{\vc(\theta)}((\theta,\beta))-\mu_{\vc(\theta_{k})}((\theta_{k},\beta_{k}))|>\varepsilon>0.
		\label{eq:discontinuity}
	\end{equation}
	
	For each $k$ select any $v_k\in \theta_k$ and $w_k\in \beta _k$. Up to taking a subsequence, we can assume that $v_k$ converges to a vector $v\in \tuc$ and $w_k$ converges to $w\in \tuc$. The continuity of the quotient map implies that $v\in \theta $ and $w\in \beta$. But now, by the observation made at the beginning and the continuity of the family of measures $\{\mu_{\tilde H^u(v)}\}_{v\in \tuc}$ proved in Proposition \ref{continuity_mF}, we obtain
	$$
	\mu_{\vc(\theta_k) }((\theta_k,\beta_k))=\mu_{\fuc(v_k)}((v_k,w_k))\to \mu_{\fuc(v)}((v,w))
	 = \mu_{\vc(\theta) }((\theta,\beta)).
	$$
	This is in clear contradiction with Equation (\ref{eq:discontinuity}).
	
\end{proof}

\section{Unique ergodicity of the horocyclic flow on $X$}\label{quotient}

 Now we define a horocyclic flow on $\tilde X$. First, we orient the horocycles in $\tuc$ thanks to the orientation of the surface, and we also get an orientation of the images of the horospheres in $\tilde X$. For each $\theta \in \tilde X$, define $h_s(\theta)$ as the unique point in the positive (resp. negative) sense of $\vc(\theta)$ such that $\mu_{\vc(\theta)}((\theta, h_s(\beta ) ))=|s|$ if $s$ is a positive (resp. negative) real number.
 
 \begin{proposition}\label{parametrization_flow}
 	The family of maps $h_s: \tilde X \to \tilde X$ is a continuous $\Gamma$-invariant flow which satisfies $\phi_t\circ h_s=  h_{se^{\delta t}}\circ \phi_t$ and whose orbits are the curves $\vc(\theta)$. Moreover, $h_s$ preserves the measure $\mu_{\tilde X}=\chi_*\mu_{BM}$.
 	
 \end{proposition}
 
 \begin{proof}
    Since both ends of $\vc (\theta)$ have infinite measure, the flow $h_s(\theta) $ exists, and it is unique because $\mu_{\vc(\theta)} $ has full support. Clearly, $h_s$ is a flow. Let us prove that it is continuous. Let $(s_k,\theta_k) \to (s,\theta)\in \reals\times \tuc$. We want to prove that $h_{s_k}(\theta_k)$ converges to $h_s(\theta)$. We assume $s\ge 0$, the other case being done analogously. We know that the curves $\vc (\beta)$ depend continuously on $\beta$, so there exist a sequence $\beta_k$ converging to $h_s(\theta)$ such that $\beta_k\in \vc(\theta_k)$. By Proposition \ref{measures_quotient}, we know that $\mu_{\vc(\theta_k)}((\theta_k,\beta_k))$ converges to $\mu_{\vc(\theta))}((\theta,h_s(\theta)))={s}$ when $k$ tends to infinity. We deduce then that the measures of the intervals $(\beta_k,h_{s_k}(\theta_k))$ go to $0$. 
	
	We claim that the distance between $\beta_k$ and $h_{s_k}(\theta_k)$ tends to $0$. Assume, contrary to our claim, that, for some $\varepsilon>0$, there is a subsequence $k_i$ such that the distance $d(\beta_{k_i},h_{s_{k_i}}(\theta_{k_i}))$ is greater than $\varepsilon$. The intervals $(\beta_{k_i},h_{s_{k_i}}(\theta_{k_i}))$ are accumulating to some interval in $\vc(\theta)$ of length at least $\varepsilon$.  
	But since the measure of $(\beta_{k_i},h_{s_{k_i}}(\theta_{k_i}))$ tends to $0$, the limiting interval should have zero measure, which is a contradiction because $\mu_{\vc(\theta_k)}$ has full support in $\vc(\theta)$. Finally, since $\beta_k$ converges to $h_s(\theta)$, the sequence $h_{s_k}(\theta_k)$ also converges to this point.

    The $\Gamma $-invariance and the uniformly expanding property of $h_s$ follow from the corresponding properties for $\mu_{\vc(\theta)}$. Finally, the conditional measures of $\mu_{\tilde X}$ along horocyclic orbits are exactly the $\mu_{\vc(\theta)}$, and since $h_s$ preserves each of these, it preserves the measure $\mu_{\tilde X}$.
    
 \end{proof}

 	In this way, we obtain a uniformly expanding continuous flow on the quotient $X$, which will also be denoted by $h_s$.

\begin{proposition}
	The flow $h_s$ on $X$ is uniquely ergodic.
\end{proposition}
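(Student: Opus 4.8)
The plan is to use Coudène's criterion for unique ergodicity, which reduces the problem to showing that the uniformly expanding horocyclic flow $h_s$ on $X$ equidistributes orbits under the action of the expanding geodesic-type flow $\phi_t$. More precisely, the key relation $\phi_t\circ h_s = h_{se^{\delta t}}\circ \phi_t$ from Proposition \ref{parametrisation_flow} means that pushing a horocyclic arc by $\phi_t$ expands its $\mu_{\tilde V^u}$-length by the factor $e^{\delta t}$, so the Birkhoff average of a continuous function $f$ over a horocyclic arc of the point $\theta$ can be rewritten (after applying $\phi_t$) as an average of $f\circ\phi_t$ over a long horocyclic arc; this is the standard Marcus-type identity, now valid on $X$ because the parametrization is genuinely uniformly expanding there. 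Thus I would first record this identity carefully, using that $\mu_X=\chi_*\mu_{BM}$ is $\phi_t$-invariant (it is the measure of maximal entropy on the quotient, by Gelfert--Ruggiero) and that its conditionals along $V^u$-leaves are the $\mu_{V^u(\theta)}$.

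Next I would invoke the structural results on $X$: by the Gelfert--Ruggiero theorem cited in the excerpt, $X$ is a compact topological $3$-manifold and $\phi_t$ is expansive, topologically mixing and has local product structure, hence (being also a continuous flow on a compact metric space with these properties, as in the Bowen--Ruelle / Franco theory, and already identified by Gelfert--Ruggiero) it has a unique measure of maximal entropy, namely $\mu_X$. The product structure of $\mu_{BM}$ established in Proposition \ref{product_measure}, pushed to $X$, gives $\mu_X$ a local product structure adapted to the $V^u$/$V^{cs}$ splitting. The remaining analytic input is equidistribution of unstable horocyclic arcs under $\phi_t$: for any continuous $f$ on $X$ and $\mu_{V^u}$-generic (in fact all) points, $\frac{1}{\mu_{V^u(\theta)}((\theta,h_S\theta))}\int_0^S f(h_s\theta)\,ds \to \int_X f\,d\mu_X$ as $S\to\infty$. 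This follows by combining the expanding identity with the mixing of $\phi_t$ against the product measure, in the now-classical way (the argument in Coudène's exposition, or Marcus's original argument, transposed verbatim to $X$); uniqueness of $\mu_X$ as the measure of maximal entropy guarantees the limit is the right one and is independent of the arc.

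Concretely the steps are: (i) establish the change-of-variables/Birkhoff identity on $X$ using $\phi_t h_s=h_{se^{\delta t}}\phi_t$ and $\mu_X=\chi_*\mu_{BM}$ with its conditionals $\mu_{V^u(\theta)}$; (ii) quote that $\phi_t$ on $X$ is expansive, topologically mixing, has local product structure, and $\mu_X$ is its unique measure of maximal entropy; (iii) prove equidistribution of $h_s$-arcs under $\phi_t$ towards $\mu_X$ using (i) and the mixing/product structure; (iv) conclude via Coudène's theorem that every $h_s$-invariant probability measure equals $\mu_X$, i.e.\ $h_s$ on $X$ is uniquely ergodic. The main obstacle is step (iii): one must control the geometry of the pushed-forward horocyclic arcs $\phi_t(\theta,h_S\theta)$ — that they genuinely fill out the $V^u$-leaf with the correct (product) measure and do not concentrate — which requires the finiteness-on-compacts, full-support and no-atoms properties of $\mu_{V^u}$ from Proposition \ref{measures_quotient}, the boundedness of interval lengths (Lemma \ref{lemma_bound_l}), and a uniform continuity/compactness argument on the compact manifold $X$ to pass from a single leaf to the product decomposition of $\mu_X$. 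Everything else is either cited or a routine transcription of the uniformly hyperbolic argument, which is exactly why the construction was moved to the quotient $X$ in the first place.
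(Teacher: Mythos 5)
Your overall strategy --- Coudène's theorem applied to the pair $(\phi_t,h_s)$ on $X$ --- is the paper's strategy, and several of your ingredients (the relation $\phi_t\circ h_s=h_{se^{\delta t}}\circ\phi_t$ from Proposition \ref{parametrisation_flow}, the joint invariance of $\mu_X=\chi_*\mu_{BM}$, and the product structure of Proposition \ref{product_measure} giving absolute continuity of $\mu_X$ with respect to the weak stable foliation) are exactly what the paper checks. However, your plan misreads what the criterion requires. The equidistribution of horocyclic arcs pushed by $\phi_t$ --- your steps (i) and (iii), which you single out as the main obstacle --- is not a hypothesis of Coudène's theorem; it is essentially the content of its proof. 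Building it into your argument forces you to invoke \emph{measure-theoretic} mixing of $\mu_X$ under $\phi_t$, which is not available in the paper: Gelfert--Ruggiero are quoted only for \emph{topological} mixing of the quotient flow, and Climenhaga--Knieper--War only for ergodicity and full support of the measure of maximal entropy; uniqueness of that measure does not by itself give mixing. As written, step (iii) is therefore a genuine gap: either you must separately establish mixing of $\mu_X$ (a nontrivial additional result), or you should delete steps (i) and (iii) and apply Coudène's theorem as a black box, which is what the paper does.

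If you take the latter route, your verification of the hypotheses is still incomplete. Coudène's theorem also requires a dense $h_s$-orbit, which your proposal never addresses; in the paper this comes from the minimality of the unstable horocycle foliation on $T^1M$ (Eberlein), which passes to $X$ through $\chi$. You also need full support of $\mu_X$ itself (not merely the leafwise full support of the $\mu_{\tilde V^u(\theta)}$ from Proposition \ref{measures_quotient}), quoted from Climenhaga--Knieper--War, and the identification, due to Gelfert--Ruggiero, of the weak stable/unstable sets of $\phi_t$ with the projections of the central leaves, so that Proposition \ref{product_measure} really does express absolute continuity with respect to $W^{ws}$ in the sense of the theorem. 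With those points supplied and the equidistribution detour removed, your argument coincides with the paper's proof.
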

\begin{proof}
	We apply the following theorem due to Coudène:
	\begin{theorem}\cite{Coudene1}
		Let $X$ be a compact metric space, $g_t$ and $h_s$ two continuous flows on $X$ which
		satisfy the relation : $g_t \circ h_ s = h_{s e^{\delta t} }\circ g_t $. Let $\mu$ a Borel probability measure	invariant under both flows, which is absolutely continuous with respect to $W^{ws}$, and with full support. Finally assume that the flow $h_s$ admits a dense orbit.
		Then $h_s$ is uniquely ergodic.
	\end{theorem}
	The flows $\phi_t$ and $h_s$, with the parametrization that we have just given, are continuous, satisfy the relation of the theorem above (Proposition \ref{parametrization_flow}) and preserve the measure $\mu_X=\chi_*\mu_{BM}$. The projections of the weak stable leaves $\fcs $ to $X$ are the local weak stable manifolds of the quotient flow $\phi_t$, as proved in \cite{GelfertRuggiero2}. It is also proved that the sets $V^u$ are the strong unstable submanifolds of $\phi_t$. Moreover, the flow $\phi_t$ has a product structure \cite[Proposition 4.11]{GelfertRuggiero2}. 
	Proposition \ref{product_measure} implies that the measure $\mu_X$ is locally the product of measures on the image of weak stable leaves by the measure on a curve $V^u$. 
	In terms of the theorem, this means that $\mu_X$ is absolutely continuous with respect to $W^{ws}$.
	It also has full support \cite[Theorem 1.1]{ClimenhagaKnieperWar21a}. Finally, the existence of a dense orbit of $h_s$ follows from the minimality of the foliation $\fu$ \cite[Theorem 4.5]{Eberlein77}.
\end{proof}

\section{Lift to the unit tangent bundle $T^1 M$}\label{lift}

So far, we have proved the unique ergodicity of a horocyclic flow on the quotient space $X$. In this final section, we will deduce that the horocyclic flow on the unit tangent bundle of the surface $M$ with any parametrization is also uniquely ergodic. For this, we need a technical result allowing to relate the two flows from the point of view of invariant measures. This is done in Subsection \ref{general_res}, and then in Subsection \ref{lift_really} we apply the result to conclude the proof.

\subsection{A general result on flow invariant measures} \label{general_res}
Let $\phi_t$ be a continuous flow without fixed points on a compact metric space $X$. We study the flow locally with the help of flow boxes.
\begin{definition}\label{flow_box}
	An open subset $U$ of $X$ is called a \emph{flow box} if there exists a closed subset $T$ of $U$, $\varepsilon>0$ and a homeomorphism 
	$$ \Phi : T \times (-\varepsilon,\varepsilon) \longrightarrow U $$
	such that $\Phi(x,s)=\phi_s(x)$ for all $(x,s)\in T \times (-\varepsilon,\varepsilon)$. The subset $T$ is called a \emph{transversal} and we write $U=\phi_{(-\varepsilon,\varepsilon)}(T)$ to express the fact that $U$ is a flow box with transversal $T$.
\end{definition}

The existence of transversals and flow boxes is guaranteed at the neighborhood of each point. We observe that if $U=\phi_{(-\varepsilon,\varepsilon)}(T)$ is a flow box, then for each open subset $S$ of $T$ and $0<\varepsilon'<\varepsilon$, $U'=\phi_{(-\varepsilon',\varepsilon')}(S)$ is also a flow box. Since we can find arbitrarily small flow boxes containing any given point, they form a base of the topology. At some point it will be easier to work with a finite number of flow boxes which form a cover of the space; this is possible thanks to the compactness assumption.

Flow boxes describe the flow locally, but in order to recover the global dynamics of the flow, we will need extra information provided by the holonomies.

\begin{definition}
	A \emph{holonomy} is a map $\Theta : T_1\to T_2$ between two transversals which is a homeomorphism onto its image, and such that, for every $x\in T_1$, $\Theta(x)$ lies in the orbit of $x$ by $\phi_t$.
\end{definition}

We remark that holonomies exist locally, that is, given two transversals $T_1$ and $T_2$, if $x\in T_1$ and $\phi_t(x)\in T_2$ for some $t\in \reals$, then for every $y\in T_1 $ close to $x$ there exists $t_y$, such that $y\mapsto t_y$ is continuous, with $t_x=t$ and such that $\phi_{t_y}(y)\in T_2$. So $\Theta(y)=\phi_{t_y}(y)$ is a holonomy.

Our goal is to study the measures preserved by the flow $\phi_t$ in terms of measures on the transversals of the flow.

\begin{definition}
	A finite Borel measure $\mu$ on $X$ is said to be \emph{invariant} by the flow $\phi_t$ if $\mu(\phi_t(A))=\mu(A)$ for all Borel subset $A$ of $X$. Let $\{\mu_T\}_T$ be a family of finite Borel measures indexed on all possible transversals $T$ to the flow $\phi_t$, with each $\mu_T$ supported on the transversal $T$. The family $\{\mu_T\}_T$ is \emph{invariant under holonomy} if every holonomy map $\Theta : T_1\to T_2$ between two transversals preserves the measures, i.e. $\Theta_*\mu_{T_1}=\mu_{T_2}|_{\Theta(T_1)}$.
\end{definition}

\begin{proposition}\label{correspondence} Let $\phi_t$ be a continuous flow without fixed points on a compact metric space $X$. 
	There is a correspondence between finite Borel measures on $X$ invariant by $\phi_t$ and families $\{\mu_T\}_T$ of finite Borel measures on the transversals invariant under holonomy.
\end{proposition}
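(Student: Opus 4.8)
The plan is to go in both directions. Starting from a $\phi_t$-invariant finite Borel measure $\mu$ on $X$, I would construct the transverse measures by the standard disintegration over flow boxes. Fix a transversal $T$ sitting inside a flow box $U=\phi_{(-\varepsilon,\varepsilon)}(T)$, so that $\Phi:T\times(-\varepsilon,\varepsilon)\to U$ is the flow homeomorphism. Push $\mu|_U$ back by $\Phi^{-1}$ to a measure on $T\times(-\varepsilon,\varepsilon)$. Flow-invariance of $\mu$ forces this measure, restricted to the sub-box $T\times(-\varepsilon',\varepsilon')$ for $\varepsilon'<\varepsilon$, to be of the form $\mu_T\otimes(\text{Lebesgue on }(-\varepsilon',\varepsilon'))$ for a unique finite Borel measure $\mu_T$ on $T$: indeed, for any Borel $A\subset T$ the function $s\mapsto \mu(\Phi(A\times\{s\}\text{-thickened}))$ is, by invariance, translation-equivariant in $s$, which pins down the Lebesgue factor and defines $\mu_T(A):=\tfrac1{2\delta}(\Phi^{-1}_*\mu)(A\times(-\delta,\delta))$ (independent of small $\delta$). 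Doing this for every transversal $T$ (each one lies in some flow box) gives a family $\{\mu_T\}_T$. The key point to check is holonomy-invariance: if $H:T_1\to T_2$ is a holonomy, cover the graph by finitely many flow boxes on which $H$ is realized as $y\mapsto\phi_{t_y}(y)$ with $t_y$ continuous; on each such piece, flow-invariance of $\mu$ together with the explicit Lebesgue-times-$\mu_{T_i}$ form of $\Phi^{-1}_*\mu$ gives $H_*\mu_{T_1}=\mu_{T_2}|_{H(T_1)}$ directly from the change of variables, and then one patches the pieces. One should also record that $\mu_T$ is finite: since $X$ is compact, finitely many flow boxes cover $X$, so $\mu(X)<\infty$ bounds each $\mu_T$ via the Fubini identity above.

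For the converse, given a holonomy-invariant family $\{\mu_T\}_T$, I would reconstruct $\mu$ by gluing local product measures. Choose a finite cover $U_1,\dots,U_N$ of $X$ by flow boxes $U_i=\phi_{(-\varepsilon_i,\varepsilon_i)}(T_i)$ and a subordinate partition of unity, or better, a Borel partition $X=\bigsqcup_i B_i$ with $B_i\subset U_i$. On each $U_i$ define the local measure $\mu^{(i)}:=\Phi_{i*}(\mu_{T_i}\otimes\mathrm{Leb}|_{(-\varepsilon_i,\varepsilon_i)})$ and set $\mu:=\sum_i \mu^{(i)}|_{B_i}$. The content is to show this is independent of all choices and is flow-invariant; both follow from holonomy-invariance of the family. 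Concretely, on an overlap $U_i\cap U_j$, sliding along the flow from $T_i$ to $T_j$ is a holonomy (locally, after shrinking), so $\mu_{T_i}$ and $\mu_{T_j}$ correspond under it, and the two local product measures $\mu^{(i)},\mu^{(j)}$ agree on $U_i\cap U_j$; hence the glued measure does not depend on the partition $\{B_i\}$. Flow-invariance of $\mu$ is then local: for $t$ small enough that $\phi_t(A)$ stays in one flow box, invariance is built into the Lebesgue factor, and for general $t$ one chops the orbit segment into finitely many small pieces using compactness.

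Finally I would check the two constructions are mutually inverse: starting from $\mu$, forming $\{\mu_T\}_T$, and regluing returns $\mu$ because on each flow box $\mu$ already had the product form used in the regluing; starting from $\{\mu_T\}_T$, regluing to $\mu$, and then disintegrating returns the same $\mu_T$ on each transversal because the disintegration of a local product measure over its own transversal recovers the transverse factor, and holonomy-invariance propagates this to every transversal. The main obstacle I anticipate is purely bookkeeping rather than conceptual: making the disintegration genuinely \emph{well-defined and consistent} across the uncountably many transversals — i.e.\ verifying that $\mu_T$ defined via one flow box containing $T$ agrees with the definition via another, and that it behaves correctly under passing to open sub-transversals $S\subset T$ (one needs $\mu_S=\mu_T|_S$, which is again a holonomy statement, the holonomy being the inclusion). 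Handling these compatibilities cleanly, together with the finitely-many-flow-boxes patching arguments, is where the care is needed; the measure-theoretic core (invariance $\Leftrightarrow$ product structure in a flow box) is elementary.
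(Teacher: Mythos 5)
Your plan is correct and follows essentially the same route as the paper, which itself only sketches this classical correspondence (defining $\mu_T(A)=\mu(\phi_{(-\varepsilon,\varepsilon)}(A))/2\varepsilon$ in one direction, gluing $\Phi_*(\mu_T\otimes \mathrm{Leb})$ over flow boxes in the other, and citing Bebutov--Stepanoff for the verifications). Your added detail on the translation-invariance argument inside a flow box and on the consistency checks is a faithful expansion of that same construction, not a different approach.
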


%picture??? pic C these %already in article2

We only recall the main idea, the details can be found in \cite[§ 2]{BeboutoffStepanoff}. If $\mu$ is invariant by $\phi_t$, the measure $\mu_T$ of a Borel subset $A $ of $T$ can be defined as 
$$\mu_T(A)=\mu(\phi_{(-\varepsilon,\varepsilon)}(A))/2\varepsilon$$
where $\varepsilon$ is chosen small enough such that $\phi_{(-\varepsilon,\varepsilon)}(T)$ is a flow box. Conversely, the measure $\mu$ is defined from $\mu_T$ on a flow box $U=\phi_{(-\varepsilon,\varepsilon)}(T)$ as $\Phi_*(\mu_T \otimes Leb)$ where $\Phi $ is the homeomorphism of Definition \ref{flow_box} and $Leb$ is the Lebesgue measure on the interval $(-\varepsilon,\varepsilon)$. One can check that these definitions do not depend on the choices made and that they glue up together, that the measures thus obtained satisfy the invariance properties, and that one construction is the inverse of the other.

We want to represent an invariant measure $\mu$ by their measures on the transversals $\mu_T$. It is clear that a lot of information is redundant. For example, it would be enough to take the transversals associated to a finite cover by flow boxes. Then the measure on the rest of the transversals is recovered as push forwards from measures on the finite collection of transversals. In fact, we need a bit less.

\begin{definition}\label{complete_set_sections}
	A finite set of transversals $T_1,\dots, T_n$ will be called \emph{complete} if every point in $X$ is in the orbit of an element of one of the $T_i$'s.
\end{definition}

\begin{proposition}\label{complete_is_enough}
	An invariant measure $\mu$ is determined by the measures $\mu_{T_1},\dots ,\mu_{T_n}$ on a complete set of transversals $T_1,\dots,T_n$.
\end{proposition}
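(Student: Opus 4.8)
The plan is to reduce the statement, via Proposition~\ref{correspondence}, to a claim purely about the holonomy-invariant families of transversal measures. Since that proposition sets up a correspondence $\mu\mapsto\{\mu_T\}_T$ (with inverse) between invariant measures and holonomy-invariant families, it is enough to show that two invariant measures $\mu,\mu'$ with $\mu_{T_i}=\mu'_{T_i}$ for $i=1,\dots,n$ already satisfy $\mu_T=\mu'_T$ for \emph{every} transversal $T$; feeding this back through the inverse correspondence then gives $\mu=\mu'$. So I would fix an arbitrary transversal $T$ and prove $\mu_T=\mu'_T$.

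The key step is a localization that uses completeness. For each $x\in T$, completeness provides an index $i=i(x)$ and a time $t_x\in\reals$ with $\phi_{t_x}(x)\in T_i$. By the local existence of holonomies recorded after the definition of holonomy, there is a transversal $U_x\subset T$ that is an open neighbourhood of $x$, together with a continuous function $y\mapsto t_y$ on $U_x$ extending $t_x$, such that $H_x\colon U_x\to T_i$, $H_x(y)=\phi_{t_y}(y)$, is a holonomy. Applying holonomy-invariance to $H_x$ (and to the inclusion $U_x\hookrightarrow T$, which is itself a holonomy, so that $\mu_{U_x}=\mu_T|_{U_x}$), I obtain $(H_x)_*(\mu_T|_{U_x})=\mu_{T_i}|_{H_x(U_x)}$ and the identical statement with $\mu'$; since $\mu_{T_i}=\mu'_{T_i}$ by hypothesis and $H_x$ is a homeomorphism onto its image, this forces $\mu_T|_{U_x}=\mu'_T|_{U_x}$ for every $x\in T$.

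It then remains to glue these local agreements. As $X$ is a compact metric space, $T$ is second countable, so the cover $\{U_x\}_{x\in T}$ of $T$ admits a countable subcover $\{U_k\}_{k\in\naturals}$. For an open subset $V\subset T$, inclusion--exclusion applied to the finite union $\bigcup_{k\le m}(V\cap U_k)$ — every intersection term of which is a Borel subset of some $U_k$, on which $\mu_T=\mu'_T$ — gives $\mu_T(\bigcup_{k\le m}(V\cap U_k))=\mu'_T(\bigcup_{k\le m}(V\cap U_k))$ for all $m$, and letting $m\to\infty$ yields $\mu_T(V)=\mu'_T(V)$. The open subsets of $T$ form a generating $\pi$-system, so by a standard $\pi$-$\lambda$ argument the two finite measures $\mu_T$ and $\mu'_T$ agree on all Borel subsets of $T$. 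Since $T$ was arbitrary, the induced families of transversal measures coincide, and hence $\mu=\mu'$ by Proposition~\ref{correspondence}.

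I expect the main obstacle — modest as it is — to be the existence of the local holonomies $H_x$: one must know that $T_{i(x)}$ meets every orbit close to that of $x$ at a time close to $t_x$, which is precisely the flow-box continuity statement recalled after the definition of holonomy. Everything else is routine: extracting a countable subcover and the $\pi$-$\lambda$ uniqueness for finite measures, plus an appeal to the two directions of Proposition~\ref{correspondence}.
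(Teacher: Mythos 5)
Your proof is correct and follows essentially the same route as the paper: both reduce to Proposition \ref{correspondence} and use local holonomies between an arbitrary transversal and the complete set $T_1,\dots,T_n$ to force the transversal measures everywhere. The only difference is presentational --- you phrase it as uniqueness of two invariant measures, with an explicit countable-cover and $\pi$-$\lambda$ gluing, while the paper phrases it as well-definedness of the reconstructed holonomy-invariant family --- but the underlying argument is the same.
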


\begin{proof}
	In view of Proposition \ref{correspondence}, we only need to show that the measures $\mu_{T_1},\dots ,\mu_{T_n}$ determine a family $\{\mu_T\}_T$ of measures on the transversals invariant under holonomy. We consider any transversal $T$ and a point $y\in T$. Since the family of transversals is complete $y=\phi_t(x)$ for some $x\in T_i$. Now consider the holonomy $\Theta_{T_i,S}$ from a small neighborhood of $x$ in $T_i$ to a neighborhood $S$ of $y$ in $T$. The only way to define a measure on $S$ that will be preserved by holonomy is to push forward the measure $\mu_{T_i}$ by the holonomy $\Theta_{T_i,S}$. The definition does not depend on the choice of $T_i$ and $x$, because if we have a different holonomy $\Theta_{T_j,S}$ from a neighborhood in $T_j$ to $S$, then $\Theta_{T_i,S*}\mu_{U_i}=(\Theta_{T_i,T_j}\circ \Theta_{T_j,S})_*\mu_{U_i}=\Theta_{T_j,S*}(\Theta_{T_i,T_j*}\mu_{U_i})=\Theta_{T_j,S*}\mu_{U_j}$ thanks to the fact that the measures on $T_i$ are preserved by holonomy. Moreover if we take another neighborhood $S'$ of a possibly different point $y'\in T$, then the definitions of the measure coincide on $S\cap S'$ by a similar argument.
	
	Consequently, we have a well defined family of measures $\{\mu_T\}_T$ on the transversals. It remains to show that they are invariant under holonomy. This immediately follows from the fact that the measures $\{\mu_{T_i}\}_{1\le i\le n}$ already have this property.
\end{proof}

We would like to remark that if the flow $\phi_t$ is minimal (that is, every orbit is dense in $X$), then any transversal $T$ of $\phi_t$ is complete. Indeed, the orbit of any point $x\in X$ passes through an open flow box $U$ around $T$, so the orbit intersects $T$ and $x$ is in the orbit of a point of $T$.

Finally we get at the main point of this section. We have seen that an invariant measure is determined by a finite number of transversal measures. So if we have two flows on two possibly different spaces, each with their transversals, but the dynamics on these transversals are essentially equal, then both flows will have the same invariant measures.

\begin{definition}
	Let $X$, $Y$ be two compact metric spaces and let $\phi_t:X\to X$, $\psi_t:Y\to Y$ be two continuous flows. Let $F: T\subset X\to Y$ a map defined on a subset $T$ of $X$. We say that the map $F$ preserves the holonomy if for all $x,y\in T$, $x$ and $y$ lie in the same orbit of $\phi_t$ if and only if $F(x)$ and $F(y)$ lie in the same orbit of $\psi_t$.
\end{definition}

\begin{theorem}\label{theorem_transversals}
	Let $X$, $Y$ be two compact metric spaces and let $\phi_t:X\to X$, $\psi_t:Y\to Y$ be two continuous flows without fixed points. Let $(T_i)_{1\le i\le n}$, $(T_i')_{1\le i\le n}$ be complete sets of transversals for the flows $\phi_t$, $\psi_t$, respectively. Assume that there is a map $F:\sqcup_{i=1}^n T_i \to \sqcup_{i=1}^n T_i'$ which preserves the holonomy and such that for all $i\in \{1,\dots , n\}$, $F|_{T_i}:T_i\to F(T_i)=T_i'$ is a homeomorphism.
	Then there is a correspondence between the finite Borel measures on $X$ invariant by $\phi_t$ and those on $Y$ invariant by $\psi_t$.
	
	In particular, $\phi_t$ is uniquely ergodic if and only if $\psi_t$ is uniquely ergodic.
\end{theorem}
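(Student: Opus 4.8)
The plan is to reduce everything to transporting across $F$ the finitely many transversal measures attached to an invariant measure by Propositions \ref{correspondence} and \ref{complete_is_enough}. Start with a finite $\phi_t$-invariant measure $\mu$ on $X$. Proposition \ref{correspondence} produces a holonomy-invariant family $\{\mu_T\}_T$ on the transversals of $\phi_t$; in particular it gives finite measures $\mu_{T_1},\dots,\mu_{T_n}$ on the complete set. Since $F|_{T_i}\colon T_i\to T_i'$ is a homeomorphism, I would push these forward and set $\nu_{T_i'}:=(F|_{T_i})_*\mu_{T_i}$, a finite measure on $T_i'$. The goal is then to show that $\nu_{T_1'},\dots,\nu_{T_n'}$ are mutually invariant under the holonomies of $\psi_t$ among the $T_i'$; granting this, the construction in the proof of Proposition \ref{complete_is_enough} extends them to a holonomy-invariant family $\{\nu_{T'}\}_{T'}$ on $Y$, which by Proposition \ref{correspondence} corresponds to a unique finite $\psi_t$-invariant measure $\nu$ on $Y$ (finite since $Y$ is compact). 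This defines the map $\mu\mapsto\nu$.

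The key point will be that $F$ induces a bijection between the holonomies among the $T_i$ and the holonomies among the $T_i'$. Let $H\colon U\to T_j$ be a holonomy with $U$ an open subset of $T_i$. Then $U':=F(U)$ is open in $T_i'$, and $H':=F|_{T_j}\circ H\circ (F|_U)^{-1}\colon U'\to T_j'$ is a homeomorphism onto its image, being a composition of such; moreover, for $x\in U$ the point $H(x)$ lies in the $\phi_t$-orbit of $x$, so, as $F$ preserves the holonomy, $H'(F(x))$ lies in the $\psi_t$-orbit of $F(x)$. Hence $H'$ is a holonomy of $\psi_t$. Running the same argument with $F^{-1}$ (which preserves the holonomy because the defining condition is symmetric in the two flows) shows that every holonomy between open subsets of the $T_i'$ arises in this way. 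Since $\{\mu_T\}_T$ is holonomy-invariant, one then computes $H'_*(\nu_{T_i'}|_{U'})=(F|_{T_j})_*H_*(\mu_{T_i}|_U)=(F|_{T_j})_*(\mu_{T_j}|_{H(U)})=\nu_{T_j'}|_{H'(U')}$, which is exactly the mutual holonomy-invariance of $\{\nu_{T_i'}\}$ that was required, so the extension step applies verbatim.

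Next I would check that $\mu\mapsto\nu$ is a bijection from the finite $\phi_t$-invariant measures on $X$ onto the finite $\psi_t$-invariant measures on $Y$: its inverse is the analogous construction built from $F^{-1}$ and the complete set $(T_i')_i$, and the two compositions are the identity because $((F|_{T_i})^{-1})_*(F|_{T_i})_*\mu_{T_i}=\mu_{T_i}$ and because, by Proposition \ref{complete_is_enough}, an invariant measure is determined by its transversal measures on a complete set. Finally, each step of the construction — forming $\{\mu_T\}$, restricting to the complete set, pushing forward by the $F|_{T_i}$, extending to all transversals, reassembling the measure via $\Phi_*(\cdot\otimes Leb)$ — is additive and positively homogeneous in the measure, so $\mu\mapsto\nu$ is an isomorphism between the cones of finite invariant measures of $\phi_t$ and of $\psi_t$. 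Since a continuous flow on a compact metric space always carries an invariant probability measure (Krylov--Bogolyubov), unique ergodicity of $\phi_t$ amounts to this cone being a single ray, and this property is preserved by the cone isomorphism; hence $\phi_t$ is uniquely ergodic if and only if $\psi_t$ is.

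I expect the only genuine obstacle to be the holonomy bookkeeping of the second paragraph: one has to verify that ``$F$ preserves the holonomy'' in the bare orbit-equivalence sense of the definition really upgrades, under conjugation by the homeomorphisms $F|_{T_i}$, to a bijection between the actual (continuous, locally defined) holonomy maps, and that the pushed-forward measures are consistent on overlapping neighbourhoods, so that the extension argument from the proof of Proposition \ref{complete_is_enough} goes through without change. Everything else is a formal consequence of the two propositions already established together with Krylov--Bogolyubov.
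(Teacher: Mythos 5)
Your proposal is correct and follows essentially the same route as the paper: reduce to the transversal measures on the complete sets via Propositions \ref{correspondence} and \ref{complete_is_enough}, push them forward by $F$, verify holonomy-invariance by conjugating holonomies with $F$ (using that the orbit-equivalence condition makes $F^{-1}\circ H\circ F$, resp.\ $F\circ H\circ F^{-1}$, a holonomy again), and conclude the bijection by reversibility. The extra remarks on the cone structure and Krylov--Bogolyubov only elaborate the ``in particular'' clause, which the paper leaves implicit.
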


\begin{proof}
	Since invariant measures can be thought as families of measures $\{\mu_{T_i}\}_i$ and $\{\mu_{T_i'}\}_i$ on the complete sets of transversals invariant by holonomy, we only have to prove the correspondence between the latter objects. This correspondence is induced by the map $F$. Given a family $\{\mu_{T_i}\}_i$ of measures invariant under holonomy, we take the pushforward by $F$ to obtain a family of measures $\{F_* \mu_{T_i}\}_i$ on the transversals $T_i'$. Let us check that they are also invariant under holonomy. Consider a holonomy $\Theta : U\subset T_i'\to T_j'$. Since $\Theta$ preserves the holonomy, the map $F^{-1}\circ \Theta \circ F:F^{-1}(U)\subset T_i \to T_j$ is a holonomy again. By the invariance of the first family of measures we have 
	$$(F^{-1}\circ \Theta \circ F)_*\mu_{T_i}=\mu_{T_j}|_{(F^{-1}\circ \Theta)(U)}.$$
	Taking $F_*$ we obtain
	$$ \Theta_* (F_*\mu_{T_i})=(F_*\mu_{T_j})|_{ \Theta(U)},$$
	which proves the invariance of the $F_*\mu_{T_i}$.
	This process is reversible because $\Theta$ is inversible, so we have a bijection.
\end{proof}
	
	The result applies when one flow is a change of time of the other and also when they are conjugated. But neither of these situations is the one where we will apply the theorem. There is no obvious relation between the horocyclic flows on the unit tangent bundle $T^1M$ and on the quotient space $X$, in fact we do not even have a homeomorphism between both spaces.

\subsection{Transversals to the horocyclic flows}\label{lift_really}

Our final goal is to show the unique ergodicity of the horocyclic flow on $T^1M$. For this, we choose a continuous parametrization of the horocyclic foliation $\fu$, for example the parametrization $h^L_t$ by the arc length of horocycles. We want to apply the result of the previous section to the flows $h^L_t$ on $T^1M$ and $h_t$ on $X$. The natural projection $\chi:T^1M\to X$ is not well behaved with respect to these flows, it collapses segments and does no preserve the parametrizations. We will find complete sets of transversals for both flows such that the restriction of $\chi$ on these transversals satisfies the properties of Theorem \ref{theorem_transversals}.

\begin{proposition}
	There exists a transversal $T$ of the flow $h^L_t$ on $T^1 M$ such that $\chi(T)$ is a transversal of the flow $h_t$ on $X$ and $\chi: T\to \chi(T)$ is a homeomorphism which preserves the holonomy.
\end{proposition}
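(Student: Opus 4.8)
The plan is to build the transversal $T$ by hand from a small piece of a central stable leaf, using the local product structure of the geodesic flow on $T^1M$ and the fact that $\chi$ respects horocyclic foliations while collapsing only the (one-dimensional) intervals $\tilde I(v)$. First I would pick a vector $v_0\in R_1$; since $R_1$ is open, dense, and contained in the expansive set $\mathcal E$, near $v_0$ the intervals $I(v)$ are trivial, so $\chi$ is a homeomorphism onto its image on a small neighbourhood of $v_0$ in $T^1M$, and in particular it maps the local central stable leaf $W^{cs}_{loc}(v_0)$ homeomorphically onto the local weak unstable leaf $W^{wu}_{loc}(\chi(v_0))$ of the quotient flow $\phi_t$ (recall $W^{wu}$ in $X$ is exactly $\chi(\fcsc)$ by \cite{GelfertRuggiero2}). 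The horocyclic flow $h^L_t$ on $T^1M$ is transverse to $\fcsc$ (its orbits are the leaves $\fu$, which meet each central stable leaf in a single point because $\fu(v)\cap\fcsc(v)=\{v\}$ off of strips — and we are at a rank $1$, hence expansive, vector), so a small relatively open piece $T$ of $W^{cs}_{loc}(v_0)$ through $v_0$ is a genuine transversal for $h^L_t$: one gets the required homeomorphism $\Phi:T\times(-\varepsilon,\varepsilon)\to U$, $\Phi(w,s)=h^L_s(w)$, by the continuity and local-product properties already recorded for the foliations $\fu,\fcsc$.

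Next I would transport this picture through $\chi$. Set $T':=\chi(T)$. Because $T\subset R_1\subset\mathcal E$ (shrinking if necessary, using openness of $R_1$), $\chi|_T$ is injective and continuous with compact source, hence a homeomorphism onto $T'$. The image $T'$ sits inside the local weak unstable leaf $W^{wu}_{loc}(\chi(v_0))=\chi(\fcsc(v_0))$, and the quotient horocyclic flow $h_t$ on $X$ is transverse to these leaves — this is precisely the transversality statement and the product structure \cite[Prop.~4.11]{GelfertRuggiero2} invoked already in Section \ref{quotient}. Therefore $T'$ is a transversal for $h_t$, with flow box $\phi_{(-\varepsilon',\varepsilon')}$-type chart coming from $h_s$. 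The one genuinely new point is that $\chi$ \emph{preserves the holonomy} on $T$: I must show that for $w_1,w_2\in T$, the vectors $w_1,w_2$ lie on the same $h^L_t$-orbit (i.e. $w_2\in\fu(w_1)$) if and only if $\chi(w_1),\chi(w_2)$ lie on the same $h_t$-orbit (i.e. $\chi(w_2)\in\vc(\chi(w_1))$). The implication ``$\Rightarrow$'' is immediate since $\chi$ sends $\fu(w_1)$ into $\vc(\chi(w_1))$ by definition of the quotient horocycles $V^u=\chi(\fu)$. For ``$\Leftarrow$'': if $\chi(w_2)\in\vc(\chi(w_1))$ then $w_2$ lies in $\chi^{-1}(\vc(\chi(w_1)))=\bigcup_{x\in\fu(w_1)}\tilde I(x)$ (up to working in the universal cover), but $w_2\in T\subset\mathcal E$ forces $\tilde I(w_2)=\{w_2\}$, so $w_2$ itself is in $\fu(w_1)$ — here I would use that the intervals are contained in central stable leaves and that a rank $1$ recurrent-type vector's horocycle stays in $R_1$ (Proposition \ref{expansive_horocycle}) to keep everything inside the expansive region.

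I expect the main obstacle to be exactly this holonomy-preservation step, specifically making the ``$\Leftarrow$'' direction airtight: one has to rule out the scenario in which two distinct points of $T$ are glued by $\chi$ or, more subtly, in which $\chi(w_1)$ and $\chi(w_2)$ end up on a common quotient horocycle without $w_1,w_2$ being on a common horocycle in $T^1M$ — this is where the choice of $T$ inside $R_1$ (so that every $\tilde I$ met is trivial) and the precise description $\chi^{-1}(\vc(\theta))=\bigcup_{x\in\fu(v)}\tilde I(x)$ for $v\in\theta$ are essential. A secondary technical nuisance is passing cleanly between $T^1M$ and $T^1\tilde M$ (the intervals $I(v)$ and the relation $\sim$ are defined downstairs, but the foliations and their geometry are cleanest upstairs); I would handle this by taking $T$ small enough to be evenly covered by $p:\tilde M\to M$ and to be disjoint from its $\Gamma$-translates, so that all the leaf and interval structure lifts unambiguously. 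Once the transversal $T$, its image $\chi(T)$, the two flow-box charts, and the holonomy-preservation are in place, the proposition is proved; the later step will be to enlarge $T$ to a complete finite collection $T_1,\dots,T_n$ using minimality of $\fu$ (\cite[Theorem 4.5]{Eberlein77}) and compactness, and then apply Theorem \ref{theorem_transversals}.
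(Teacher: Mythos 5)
Your construction matches the paper's: pick $v_0\in R_1$, take $T$ a small compact neighbourhood of $v_0$ inside $\fcs(v_0)\cap R_1$ (transversality coming from $G^s\neq G^u$ and the tangency of horocycles to Green bundles), use triviality of the classes on $R_1$ to make $\chi|_T$ a homeomorphism, and get holonomy preservation from $V^u(\chi(v))=\chi(H^u(v))$ together with the fact that the classes $I(x)$ sit inside $H^u(x)$ (your expansivity argument for the ``$\Leftarrow$'' direction is a correct variant of this). The only divergence is cosmetic: you certify that $\chi(T)$ is a transversal in $X$ via the quotient flow's local product structure, whereas the paper pushes the flow box $h^L_{(-\delta,\delta)}(T)\subset R_1$ through $\chi$ and shrinks $\varepsilon$ using continuity of $h_t$; both are adequate.
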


\begin{proof}
	The subset of generalized rank $1$ vectors
	$$R_1=\{v\in T^1 M | \,  G^u(v)\not = G^s(v)\}$$
	of $T^1M$ is nonempty and open because we are assuming the continuity of the Green bundles. Moreover the tangent space to the stable (resp. unstable) leaf $\fs(v)$ is the stable (resp. unstable) Green subspace $G^s(v)$ at each point $v\in T^1 M$. As a consequence, the weak stable leaf $\fcs(v)$ is transverse to the unstable leaf $\fu(v)$ at points $v\in R_1$. Fix $v\in R_1$, and consider a relatively compact neighborhood $T$ of $v$ in $\fcs(v)\cap R_1$, so the unstable leaves are still transverse to $\fcs(v)$ at each point of $T$.
	
	The set $R_1$ does not contain vectors with nontrivial strips, so $\chi: R_1 \to \chi(R_1)$ is a homeomorphism as well as $\chi:T\to \chi(T)$. Since $T$ is a transversal of the horocyclic flow $h^L_t$ and $\bar{T}\subset R_1$, we can consider a flow box of the form $U=h^L_{(-\delta ,\delta)}(T)$ included in $R_1$. Now, $\chi(T)$ is relatively compact and with closure included in the open subset $\chi(U)$. There exists $\varepsilon>0$ such that $h_{(-\varepsilon,\varepsilon)}(\chi(T))$ is included in $\chi(U)$. It is straightforward to see that $h_{(-\varepsilon,\varepsilon)}(\chi(T))$ is a flow box with transversal $\chi(T)$ using the continuity of $h_t$.
	
	Finally, $\chi$ automatically preserves the holonomy because for $v,w\in T$, we have $w\in \fu(v)$ if and only if $\chi(w)\in V^u(\chi(v))=\chi(H^u(v))$. 
\end{proof}

We recall that the flow $h^L_t$ is minimal \cite[Theorem 4.5]{Eberlein77}. Since minimality is a property of the orbits of the flows, the unstable leaves, and the map $\chi$ is continuous, then it passes to the images of the unstable leaves, which are exactly the orbits of $h_t$. In short, the flow $h_t$ on $X$ is also minimal. As we observed after Proposition \ref{complete_is_enough}, in the case of minimal flows any transversal is complete in the sense of Definition \ref{complete_set_sections}, so we can directly apply Theorem \ref{theorem_transversals} to the transversals $T$ and $\chi(T)$ of the previous proposition and we get the correspondence between $h^L_t$-invariant measures and $h_t$-invariant measures. Since the flow $h_t$ is uniquely ergodic, we get the desired result.

\begin{theorem}
	The horocyclic flow $h^L_t$ on the unit tangent bundle of a compact surface of genus equal or higher than $2$ without conjugate points and with continuous Green bundles is uniquely ergodic.
\end{theorem}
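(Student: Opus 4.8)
The plan is to deduce the statement by transporting the unique ergodicity of the horocyclic flow $h_t$ on the quotient $X$, established in Section~\ref{quotient}, back to $T^1M$ through the transfer result, Theorem~\ref{theorem_transversals}. So the first ingredient I would invoke is precisely that $h_t$ on $X$ is uniquely ergodic; I take this as given here, since it was the content of the previous section (Coudène's criterion applied to $\phi_t$, $h_s$ and $\mu_X=\chi_*\mu_{BM}$).

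The second step is to exhibit matched complete sets of transversals. The preceding proposition supplies a transversal $T\subset R_1\cap \fcs(v)$ of $h^L_t$ on $T^1M$ such that $\chi(T)$ is a transversal of $h_t$ on $X$ and $\chi\colon T\to\chi(T)$ is a homeomorphism preserving the holonomy. Taking $n=1$, $T_1=T$ and $T_1'=\chi(T)$, I would check that both singletons are \emph{complete} in the sense of Definition~\ref{complete_set_sections}. For $h^L_t$ this follows from the minimality of the unstable horocyclic foliation \cite[Theorem 4.5]{Eberlein77}: every orbit is dense, hence meets the flow box around $T$, hence meets $T$. Since $\chi$ is continuous and surjective and maps each unstable horocycle onto a full orbit of $h_t$, minimality descends to $h_t$ on $X$, so $\chi(T)$ is complete as well (recall the remark after Proposition~\ref{complete_is_enough} that any transversal of a minimal flow is complete).

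With $F=\chi|_T$, Theorem~\ref{theorem_transversals} then applies verbatim and yields a bijective correspondence between the finite Borel measures on $T^1M$ invariant by $h^L_t$ and those on $X$ invariant by $h_t$. In particular $h^L_t$ is uniquely ergodic precisely when $h_t$ is, and the latter was proved in Section~\ref{quotient}; this completes the argument. The one point that requires a little care — and the only place where the non-injectivity of $\chi$, which collapses the intervals $I(v)$, could cause trouble — is the descent of minimality from $h^L_t$ to $h_t$ and the resulting completeness of $\chi(T)$; everything genuinely hard (the product structure of $\mu_{BM}$ from Proposition~\ref{product_measure}, the expanding parametrisation and Coudène's criterion on $X$, and the transversal machinery of Section~\ref{lift}) has already been carried out, so the present proof is essentially an assembly of those results.
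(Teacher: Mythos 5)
Your proposal is correct and follows the paper's own argument essentially verbatim: take the transversal $T$ from the preceding proposition, use minimality of the unstable horocyclic foliation (and its descent to $h_t$ on $X$ via $\chi$) to see that $T$ and $\chi(T)$ are complete, then apply Theorem \ref{theorem_transversals} with $F=\chi|_T$ and transfer unique ergodicity from $h_t$ on $X$ to $h^L_t$ on $T^1M$. No substantive difference from the paper's proof.
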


\bibliographystyle{plain}
\bibliography{general_library}
\end{document}